\theoremstyle{plain}
\newtheorem{thm}{\protect\theoremname}
  \theoremstyle{definition}
  \newtheorem{defn}[thm]{\protect\definitionname}
  \theoremstyle{definition}
  \newtheorem{example}[thm]{\protect\examplename}
  \theoremstyle{plain}
  \newtheorem{lem}[thm]{\protect\lemmaname}
  \theoremstyle{plain}
  \newtheorem{prop}[thm]{\protect\propositionname}
  \theoremstyle{plain}
  \newtheorem{cor}[thm]{\protect\corollaryname}
  \theoremstyle{remark}
  \newtheorem{rem}[thm]{\protect\remarkname}
\setlist{leftmargin=*}
  \providecommand{\corollaryname}{Corollary}
  \providecommand{\definitionname}{Definition}
  \providecommand{\examplename}{Example}
  \providecommand{\lemmaname}{Lemma}
  \providecommand{\propositionname}{Proposition}
  \providecommand{\remarkname}{Remark}
\providecommand{\theoremname}{Theorem}
\begin{document}
\global\long\def\R{\mathbb{R}}

\global\long\def\N{\mathbb{N}}

\global\long\def\C{\mathbb{C}}

\global\long\def\Cl{C}

\global\long\def\tr{\mathrm{tr}}

\global\long\def\phi{\varphi}

\global\long\def\epsilon{\varepsilon}

\global\long\def\div{{\rm div}}

\global\long\def\Hcurl{H(\curl,\Omega)}

\global\long\def\Hmu{H^{\mu}(\curl,\Omega)}

\global\long\def\Hocurl{H_{0}(\curl,\Omega)}

\global\long\def\Hdiv{H(\div,\Omega)}

\global\long\def\k{\omega}

\global\long\def\Cone{C^{1}(\overline{\Omega};\C)}

\global\long\def\curl{{\rm curl}}

\global\long\def\supp{{\rm supp}}

\global\long\def\sp{{\rm span}}

\global\long\def\bo{\partial\Omega}

\global\long\def\eig{\phi}

\global\long\def\g{g}

\global\long\def\q{q}

\global\long\def\A{\mathcal{A}}

\global\long\def\e{\mathbf{e}}

\selectlanguage{british}%
\global\long\def\ldr{L^{2}(\Omega;\R)}

\global\long\def\Vr{H_{0}^{1}(\Omega;\R)}

\global\long\def\Coner{\Cl^{1}(\overline{\Omega};\R)}

\global\long\def\V{H_{0}^{1}(\Omega;\C)}

\global\long\def\mina{\lambda}

\global\long\def\maxa{\Lambda}

\global\long\def\Honer{H^{1}(\Omega;\R)}

\selectlanguage{english}%
\title[Critical Points of Solutions to the Helmholtz Equation in 3D]{Absence of Critical Points of Solutions to the Helmholtz Equation in 3D}
\author{Giovanni S.~Alberti}
\address{Department of Mathematics and Applications,  \'Ecole Normale Sup\'erieure, 45 rue d'Ulm, 75005 Paris, France.} 
\email{giovanni.alberti@ens.fr}
\thanks{This work was partially supported by  the EPSRC Science \& Innovation Award to the Oxford Centre for Nonlinear PDE (EP/EO35027/1) and by the ERC Advanced Grant Project MULTIMOD-267184.}
\keywords{Helmholtz equation, critical set, Rad\'o-Kneser-Choquet theorem, multiple frequencies, hybrid inverse problems, generic property, non-zero constraints}
\date{22\textsuperscript{nd} January 2016}
\subjclass[2010]{35B30, 35B38, 35J05, 35P10, 35R30}
\begin{abstract}
The focus of this paper is to show the absence of critical points
for the solutions to the Helmholtz equation in a bounded domain $\Omega\subset\R^{3}$,
given by
\[
\left\{ \begin{array}{l}
-\div(a\,\nabla u_{\omega}^{g})-\omega qu_{\omega}^{g}=0\quad\text{in \ensuremath{\Omega},}\\
u_{\omega}^{g}=g\quad\text{on \ensuremath{\partial\Omega}.}
\end{array}\right.
\]
We prove that for an \emph{admissible} $g$ there exists a finite
set of frequencies $K$ in a given interval and an open cover $\overline{\Omega}=\cup_{\omega\in K}\Omega_{\omega}$
such that $|\nabla u_{\omega}^{g}(x)|>0$ for every $\omega\in K$
and $x\in\Omega_{\omega}$. The set $K$ is explicitly constructed.
If the spectrum of this problem is simple, which is true for
a generic domain $\Omega$, the admissibility condition on $g$ is
a generic property.
\end{abstract}
\maketitle
\maketitle

\section{Introduction}

The Rad\'o-Kneser-Choquet theorem states that the solutions of the Dirichlet
boundary value problem for the Laplace equation in the unit disk $\mathbb{D}$
of $\R^{2}$ given by
\[
\left\{ \begin{array}{l}
-\Delta u^{i}=0\quad\text{in }\mathbb{D},\\
u^{i}=g_{i}\quad\text{on }\partial\mathbb{D},
\end{array}\right.
\qquad i=1,2,
\]
where $(g_{1},g_{2})\colon\partial\mathbb{D}\to\R^2$ is a homeomorphism of the unit sphere onto
the boundary of a bounded convex set, satisfy
\begin{equation}
|\det(\nabla u^{1},\nabla u^{2})(x)|>0,\qquad x\in\mathbb{D}.\label{eq:constraint rado}
\end{equation}
Formulated by Rad\'o in terms of harmonic functions \cite{RADO-1926},
this result was proved later by Kneser \cite{KNESER-1926} and Choquet
\cite{choquet-1945}. Both proofs are based on the absence of critical
points of harmonic functions in $\mathbb{D}$ having certain boundary
values. The reader is referred to \cite{DUREN-2004} for a review
of these topics. The relevance of this result (and of its generalizations
discussed below) goes beyond the theory of harmonic mappings, since
the constraint given in \eqref{eq:constraint rado} appears in many
contexts, such as in elasticity theory, homogenization \cite{ALESSANDRINI-NESI-01}
and inverse problems \cite{bal2012_review}.

This result has been extended to the conductivity equation in a bounded
convex set $\Omega\subset\R^{2}$
\[
\left\{ \begin{array}{l}
-\div(a\nabla u^{i})=0\quad\text{in }\Omega,\\
u^{i}=g_{i}\quad\text{on }\partial\Omega,
\end{array}\right.
\qquad i=1,2,
\]
where $a\in C^{0,\alpha}(\overline{\Omega};\R^{2\times 2})$ and is
uniformly elliptic. Under the same assumptions on the boundary values,
there holds
\[
|\det(\nabla u^{1},\nabla u^{2})(x)|>0,\qquad x\in\Omega,
\]
see \cite{ALESSANDRINI-NESI-01,bauman2001univalent,alessandrini-nesi-2015,alberti-capdeboscq-2015}.
As above, the proof is based on the fact that if $\det(\nabla u^{1},\nabla u^{2})(x)=0$ for some $x\in\Omega$,
then a linear combination $u$ of $u^{1}$ and $u^{2}$ has a critical
point in $x$, i.e.\ $\nabla u(x)=0$, and $-\div(a\nabla u)=0$ in $\Omega$ by linearity. This simple argument shows that the essence of the Rad\'o-Kneser-Choquet theorem lies in the absence of critical points. In other words, the study of the Jacobian for systems of equations reduces to the study of the critical set for the scalar conductivity equation. The absence of critical points of solutions with suitable boundary conditions is a consequence
of  the topological properties of the two-dimensional space and of the maximum principle
\cite{alessandrini1986,alessandrinimagnanini1994}.

In three dimensions, the Rad\'o-Kneser-Choquet theorem and its generalizations to non-constant
conductivities $a$ completely fail. Laugesen
\cite{laugesen-1996} proved that there exists a self-homeomorphism
of $S^{2}$ such that the Jacobian of its harmonic extension vanishes
at some point (see also \cite[Section 3.7]{DUREN-2004}). Similar
negative results for the case $a\not\equiv 1$ have been
obtained by Briane et al. \cite{briane-milton-nesi-2004} and by Capdeboscq
\cite{cap-2015}. This phenomenon is clearly connected with the fact
that in three dimensions the set of critical points is one-dimensional,
while in 2D it consists of isolated points \cite{han-1994,hardt-1999,han-lin}. Similarly, the use of the maximum principle is essential: for instance, the solutions to $-\Delta u - u  = 0$ are oscillatory, and in general critical points will occur.

Motivated by these negative results, in this paper we study the absence of critical points in a situation where none of these two requirements are fulfilled:  the three-dimensional Helmholtz equation
\begin{equation}
\left\{ \begin{array}{l}
-\div(a\,\nabla u_{\k}^{g})-\k\q u_{\k}^{\g}=0\qquad\text{in \ensuremath{\Omega},}\\
u_{\k}^{\g}=\g\qquad\text{on \ensuremath{\partial\Omega},}
\end{array}\right.\label{eq:helmholtz-intro}
\end{equation}
where $\q\in L^{\infty}(\Omega;\R_{+})$
and $a\in C^{0,\alpha}(\overline{\Omega};\R^{3\times3})$ and is uniformly
elliptic.  As explained for the conductivity equation, the study of the critical set is closely related to the study of the Jacobian, which would correspond to the Rad\'o-Kneser-Choquet theorem for the Helmholtz equation in 3D. However, such extension does not seem immediate
at this stage (see Section~\ref{sec:Towards-the-study} for an intermediate result), and remains
an interesting open problem. In general, it is not possible to show the absence of critical points in the whole domain for a fixed choice of the boundary
value and of the frequency, because of the obstructions discussed above: at best, only the local absence of critical points can be proved. In this work, this is achieved  by using multiple frequencies $\omega$ in a fixed range
$\A=[K_{min},K_{max}]$, 
which allows to compensate the
lack of the maximum principle and the dimensionality issue.

The main result of this work states that if $g$ is \emph{admissible} then
there exists a finite set of frequencies $K\subseteq\A$ and a finite open
cover $\overline{\Omega}=\cup_{\omega\in K}\Omega_{\omega}$ such
that $|\nabla u_{\omega}^{\g}(x)|>0$ for every $\omega\in K$ and
$x\in\Omega_{\omega}$ (Theorem~\ref{thm:genericity}). The set $K$
is explicitly constructed. This result shows that, if $\g$ is admissible, the
critical set $\{x\in\overline{\Omega}:|\nabla u_{\omega}^{\g}(x)|=0\}$ moves when
$\omega$ changes, and that with a finite number of frequencies it
is possible to enforce this constraint everywhere. The proof is based
on the holomorphicity of the map $\omega\mapsto u_{\omega}^{\g}$
and on the spectral analysis of \eqref{eq:helmholtz-intro}.  As such, the method considered in this paper is not well suited
for the fixed-frequency case.

The two-dimensional results discussed above required the boundary
values to be suitably chosen, in a way that was independent of the
conductivity. This seems to be unachievable in 3D if $a$ is not constant.
Our result is valid for \emph{admissible} boundary values $g$,
a concept that will be precisely defined in Definition~\ref{def:admissible}.
It is worth mentioning that in most cases, e.g. if the spectrum of
\eqref{eq:helmholtz-intro} is simple or if $\Omega$ is a cuboid
with $a\equiv q\equiv 1$, the admissibility is a generic property (Proposition~\ref{prop:admissible is generic}).
In particular, the admissibility is a generic property for a generic
bounded domain $\Omega$ in the class of $C^{2}$ domains, provided
that $a$ and $\q$ are $C^{2}$. Thus, our result will be valid for
a generic boundary condition in a generic bounded domain (Corollary~\ref{cor:genericity}).

This paper represents a generalization of several works \cite{alberti2013multiple,albertigsII,2014-albertigs,ammari2014admittivity,alberti-ammari-ruan-2014,alberti-capdeboscq-2014,alberti-capdeboscq-analytic},
in which multiple frequencies are used to enforce several non-zero local
constraints for certain frequency-dependent PDE. All these results
are based on the fact that the constraint under investigation is satisfied
for a particular choice of the frequency, e.g. in $\omega=0$. However,
as mentioned above, this cannot be done for the conductivity equation
in 3D if $a\not\equiv1$. The purpose of this work is to show that
this approach can be used even without this requirement.

In addition to its theoretical interest, the problem studied in this
paper, and similar problems related to different PDE or different
constraints, are motivated by the mathematical theory of hybrid imaging
inverse problems \cite{bal2012_review,kuchment-2012}. These are inverse problems with internal data, and such
constraints for the Helmholtz equation
give uniqueness and Lipschitz stability of the reconstruction. For instance, the Jacobian constraint naturally appears in microwave imaging by elastic deformation
\cite{cap2011,alberti2013multiple}, and the absence of critical points is needed in transient elastography \cite{bal2012_review}. It is worth mentioning that complex geometric
optics solutions or the Runge approximation property can be used in
3D to construct solutions to the conductivity equation or to the Helmholtz
equation with non-zero Jacobian, at least locally \cite{bal2011reconstruction,bal2012inversediffusion}.
The construction of suitable boundary conditions usually depends on
the unknown coefficients of the PDE. The approach of this paper is
different in nature, since almost any choice of the boundary value
suffices.

This paper is structured as follows. In Section~\ref{sec:Main-result}
we discuss the main results of this work. Section~\ref{sec:Proof-of-Theorem}
is devoted to the proof of Theorem~\ref{thm:genericity} and Section~\ref{sec:The-genericity-of}
to the genericity of the admissibility property. Finally, in Section~\ref{sec:Towards-the-study}
we discuss the issue of extending this work to the Jacobian for systems of equations and present
an intermediate result.

\section{\label{sec:Main-result}Main Results}

Let $\Omega\subseteq\R^{3}$ be a $C^{1,\alpha}$ bounded domain for
some $\alpha\in(0,1)$. We consider the Dirichlet boundary value problem
for the Helmholtz equation 
\begin{equation}
\left\{ \begin{array}{l}
-\div(a\,\nabla u_{\k}^{\g})-\k\q u_{\k}^{\g}=0\quad\text{in \ensuremath{\Omega},}\\
u_{\k}^{\g}=\g\quad\text{on \ensuremath{\partial\Omega},}
\end{array}\right.\label{eq:helmholtz-genericity}
\end{equation}
where $a\in C^{0,\alpha}(\overline{\Omega};\R^{3\times3})$ and $\q\in L^{\infty}(\Omega;\R)$
satisfy
\begin{equation}
\begin{aligned} & \Lambda^{-1}|\xi|^{2}\le a\xi\cdot\xi\le\Lambda|\xi|^{2},\qquad\xi\in\R^{3},\\
 & \Lambda^{-1}\le\q\le\Lambda\quad\text{almost everywhere},
\end{aligned}
\label{eq:ass-ellipticity}
\end{equation}
for some $\Lambda>0$. Let $0<\lambda_{1}<\lambda_{2}\le\dots$ denote
the Dirichlet eigenvalues of the above problem counted according to
their multiplicity. Namely, \eqref{eq:helmholtz-genericity} admits
a unique solution $u_{\omega}^{\g}\in H^{1}(\Omega;\C)$ for all $\g\in C^{1,\alpha}(\overline{\Omega};\R)$
and $\omega\in\C\setminus\Sigma$, where $\Sigma=\{\lambda_{l}:l\in\N^{*}\}$.
Let $\eig_{l}\in H_{0}^{1}(\Omega;\R)$ be the associated eigenfunctions
such that
\begin{equation}
-\div(a\nabla\eig_{l})=\lambda_{l}\q\eig_{l}\quad\text{in \ensuremath{\Omega},}\label{eq:eigen-1}
\end{equation}
subject to the normalization $\left\Vert \eig_{l}\right\Vert _{\ldr}^{2}:=\int_{\Omega}\q\eig_{l}^{2}\, dx=1$
(note that, in view of \eqref{eq:ass-ellipticity}, this norm is equivalent
to the standard $L^{2}$ norm). Classical elliptic theory \cite{evans,gilbarg2001elliptic}
gives that $\{\eig_{l}:l\in\N^{*}\}$ is an orthonormal basis of $\ldr$
and an orthogonal basis of $H_{0}^{1}(\Omega;\R)$ and that $u_{\omega}^{\g},\eig_{l}\in C^{1}(\overline{\Omega};\C)$.
Set $\psi_{l}=(a\nabla\eig_{l})\cdot\nu\in C(\partial\Omega;\R)$,
where $\nu$ denotes the unit outer normal to $\bo$.

We consider multiple frequencies in a fixed interval. More precisely,
let $\A=[K_{min},K_{max}]$ denote the range of admissible frequencies
for some $K_{min}<K_{max}$. From this interval, a finite number of
frequencies will be selected in the following way. For $n\in\N$,
let $K^{(n)}$ denote the uniform sampling of $\A$ such that $\#K^{(n)}=2^{n}+1$,
namely
\begin{equation}
K^{(n)}=\{K_{min}+2^{-n}(i-1)(K_{max}-K_{min}):i=1,\dots,2^{n}+1\}.\label{eq:K^(n)}
\end{equation}
Note that $K^{(n)}\subseteq K^{(n+1)}$.

Our results are valid for a particular class of boundary conditions.
\begin{defn}
\label{def:admissible}We say that $\g\in C^{1,\alpha}(\overline{\Omega};\R)$
is \emph{admissible} if for every $x\in\overline{\Omega}$ there exists
$\lambda\in\Sigma$ such that
\[
\sum_{l:\lambda_{l}=\lambda}(\g,\psi_{l})_{L^{2}(\bo;\R)}\nabla\eig_{l}(x)\neq0.
\]

\end{defn}
We shall show below that, in most situations, the admissibility is
a generic property%
\footnote{We say that a property is generic if it holds in a residual subset,
i.e.\ a countable intersection of dense open sets.%
}. The main result of this paper states that using multiple frequencies
it is possible to satisfy the constraint $\nabla u_{\omega}^{\g}(x)\neq0$
everywhere in $\overline{\Omega}$, provided that $\g$ is admissible. 
\begin{thm}
\label{thm:genericity}Let $\Omega\subseteq\R^{3}$ be a $C^{1,\alpha}$
bounded domain, $a\in C^{0,\alpha}(\overline{\Omega};\R^{3\times3})$
and $\q\in L^{\infty}(\Omega;\R)$ be such that \eqref{eq:ass-ellipticity}
holds true and $\g\in C^{1,\alpha}(\overline{\Omega};\R)$ be admissible.

There exists $n\in\N$ and a finite open cover
\[
\overline{\Omega}=\bigcup_{\omega\in K^{(n)}\setminus\Sigma}\Omega_{\omega},
\]
where $K^{(n)}$ is given by \eqref{eq:K^(n)}, such that for every $\omega\in K^{(n)}\setminus\Sigma$ there holds
\begin{equation}
|\nabla u_{\omega}^{\g}(x)|>0,\qquad x\in\Omega_{\omega}.\label{eq:constraint}
\end{equation}

\end{thm}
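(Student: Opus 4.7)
The plan is to exploit the meromorphic dependence of $u_\omega^\g$ on $\omega$: the admissibility hypothesis is designed precisely to guarantee that the residues of this meromorphic family, read off from $\nabla u_\omega^\g(x)$, do not vanish. Standard resolvent theory combined with Schauder estimates (available thanks to the $C^{0,\alpha}$ regularity of $a$ and the $C^{1,\alpha}$ regularity of $\bo$) shows that $\omega\mapsto u_\omega^\g$ is holomorphic on $\C\setminus\Sigma$ as a map into $C^{1}(\overline{\Omega};\C)$ with simple poles at every $\lambda\in\Sigma$. Multiplying \eqref{eq:helmholtz-genericity} by an eigenfunction $\eig_l$, applying Green's identity twice, and using $\eig_l=0$ on $\bo$ yield
\[
(\omega-\lambda_l)\int_{\Omega}\q\, u_\omega^\g\,\eig_l\,dx \;=\; (\g,\psi_l)_{L^{2}(\bo;\R)},
\]
so the principal part of $u_\omega^\g$ near $\omega=\lambda$ is $\frac{1}{\omega-\lambda}\sum_{l:\lambda_l=\lambda}(\g,\psi_l)_{L^{2}(\bo;\R)}\,\eig_l$. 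Because this decomposition holds in $C^{1}(\overline{\Omega};\C)$, for each fixed $x\in\overline{\Omega}$ the meromorphic function $F_x(\omega):=\nabla u_\omega^\g(x)$ on $\C$ has residue at $\lambda\in\Sigma$ equal to $\sum_{l:\lambda_l=\lambda}(\g,\psi_l)_{L^{2}(\bo;\R)}\,\nabla\eig_l(x)$, which is exactly the expression appearing in Definition~\ref{def:admissible}.

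Fix $x\in\overline{\Omega}$. By admissibility there is some $\lambda\in\Sigma$ at which the residue of $F_x$ is non-zero; since $\Sigma$ is discrete, $\C\setminus\Sigma$ is connected, and $F_x$ is holomorphic there, the identity theorem forces $F_x\not\equiv 0$ on the open subset $\A\setminus\Sigma$. Hence there exists $\omega_x\in\A\setminus\Sigma$ with $\nabla u_{\omega_x}^\g(x)\neq 0$. The joint continuity of $(y,\omega)\mapsto\nabla u_\omega^\g(y)$ on $\overline{\Omega}\times(\A\setminus\Sigma)$, inherited from the $C^{1}(\overline{\Omega};\C)$-valued holomorphy of the resolvent, then supplies an open neighbourhood $U_x\subset\overline{\Omega}$ of $x$ and an open interval $V_x\subset\A\setminus\Sigma$ containing $\omega_x$ such that $\nabla u_\omega^\g(y)\neq 0$ for all $(y,\omega)\in U_x\times V_x$. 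Compactness of $\overline{\Omega}$ extracts a finite subcover $U_{x_1},\dots,U_{x_m}$, with associated intervals $V_{x_1},\dots,V_{x_m}$.

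The remaining step is to realize the chosen frequencies inside $K^{(n)}$. Since $\bigcup_{n\in\N}K^{(n)}$ is dense in $\A$ and each $V_{x_i}$ is a non-empty open interval already contained in $\A\setminus\Sigma$, choosing $n\in\N$ with $2^{-n}(K_{max}-K_{min})<\min_i|V_{x_i}|$ ensures that every $V_{x_i}\cap K^{(n)}$ is non-empty and lies entirely in $\A\setminus\Sigma$. Picking any $\tilde\omega_i\in V_{x_i}\cap K^{(n)}$, setting $\Omega_{\tilde\omega_i}:=\bigcup_{j:\tilde\omega_j=\tilde\omega_i}U_{x_j}$, and letting $\Omega_\omega:=\emptyset$ for the remaining $\omega\in K^{(n)}\setminus\Sigma$, produces the open cover and the pointwise inequality \eqref{eq:constraint}.

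I expect the principal obstacle to be the first step: promoting the holomorphy and the Laurent expansion of $\omega\mapsto u_\omega^\g$ from the $L^{2}$- or $H^{1}$-valued setting, where it is essentially classical, to the $C^{1}(\overline{\Omega};\C)$-valued setting needed to validate the residue formula for $\nabla u_\omega^\g(x)$ pointwise in $x$. Once this regularity upgrade is in hand, the rest of the argument is a combination of the identity theorem, continuity, compactness, and the density of $\bigcup_{n}K^{(n)}$ in $\A$.
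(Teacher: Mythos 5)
The gap is exactly where you flagged it, and it is not a routine technicality: it is the entire content of the theorem. Your Green's identity computation correctly gives $(\omega-\lambda_l)\int_\Omega \q\, u_\omega^\g\,\eig_l\,dx=(\g,\psi_l)_{L^{2}(\bo;\R)}$, but this is an $L^2$-statement: it identifies Fourier coefficients, hence the principal part of $u_\omega^\g$ at $\lambda$ as an $\ldr$-valued meromorphic function. To read off the residue of $F_x(\omega)=\nabla u_\omega^\g(x)$ \emph{pointwise}, you need the Laurent decomposition to hold in $C^{1}(\overline{\Omega};\C)$, and you assert this by appeal to ``standard resolvent theory combined with Schauder estimates'' without proof. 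The upgrade is not automatic, and the naive route fails: the eigenfunction expansion $\sum_l\frac{(\g,\psi_l)_{L^{2}(\bo;\R)}}{\omega-\lambda_l}\eig_l$ cannot converge uniformly on $\overline{\Omega}$ (let alone in $C^1$) for admissible $\g$, because every partial sum vanishes on $\bo$ while $u_\omega^\g=\g\not\equiv 0$ there. An honest proof must therefore either (i) establish uniform-in-$\omega$ elliptic bounds (De Giorgi--Nash--Moser plus Schauder, giving $C^{1,\beta}$ control) on $(\omega-\lambda)u_\omega^\g$ near $\lambda$ and invoke a vector-valued removable-singularity theorem to identify the $C^1$-limit with the $L^2$-residue, or (ii) do what the paper does: differentiate $m$ times in $\omega$, which simultaneously puts $\partial_\k^m u_\k^\g$ in $H^1_0$ (killing the boundary mismatch, see \eqref{eq:dk uk}) and improves the coefficient decay to $(\lambda_l-\k)^{-(m+1)}$ (see \eqref{eq:3}); then the polynomial bound $\|\eig_l\|_{\Coner}\le c\lambda_l^{P}$ of \eqref{eq:phi_i in C^1} together with Weyl's asymptotics (Lemma~\ref{lem:weyl's lemma}) lets one choose $m$ so large that the series converges in $C^{1}(\overline{\Omega};\R)$, after which the residue information is extracted by letting $\k\to\lambda$ in a contradiction argument (Proposition~\ref{prop: inclusion}). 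Your proposal contains neither (i) nor (ii), so the claim on which everything rests is assumed rather than proven.

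Granting that step, the rest of your argument is correct and in fact somewhat tidier than the paper's. Working with the residue directly at a real eigenvalue lets you produce a real $\omega_x\in\A\setminus\Sigma$ with $\nabla u_{\omega_x}^\g(x)\neq0$ via the identity theorem on the connected set $\C\setminus\Sigma$, whereas the paper, whose Proposition~\ref{prop: inclusion} only yields a possibly complex frequency, must introduce the auxiliary function $\zeta_x(\omega)=\nabla u_\k^\g(x)\cdot\overline{\nabla u_{\overline{\k}}^\g}(x)$ to return to the real axis. Likewise, your mesh argument --- a nonempty open interval $V_{x_i}\subseteq\A\setminus\Sigma$ must meet $K^{(n)}$ once $2^{-n}(K_{max}-K_{min})<\min_i|V_{x_i}|$ --- is a valid and more transparent substitute for the paper's analytic continuation along a sequence $\omega_m\to\omega$ with $\omega_m\in K^{(m)}\setminus\Sigma$. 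But since the step you deferred is precisely what Lemma~\ref{lem:holomorphic}, Lemma~\ref{lem:weyl's lemma} and Proposition~\ref{prop: inclusion} exist to accomplish, the proposal as written does not constitute a proof.
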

In other words, if the boundary value $\g$ is admissible, it is possible to find a finite set of frequencies $\{\omega\}\subseteq\A$ and corresponding subdomains $\Omega_\omega$ covering $\overline{\Omega}$ such that each subdomain does not contain any critical points of $u_\omega^\g$. As mentioned in the Introduction, considering multiple solutions and multiple subdomains is necessary, since solutions to  \eqref{eq:helmholtz-genericity} are oscillatory, and critical points will in general occur. Thus, it is not possible to enforce the above constraint globally with a single solution. This can already be seen in 1D with $a\equiv q\equiv 1$, where the solutions will be of the type $u^\g_\k(x) =c_1 \cos ( \sqrt{\k} \,x + c_2)$ for some $c_1,c_2\in\R$ depending on $\Omega$, $\g$ $\omega$. The critical points are unavoidable, but depend on the frequency: this is the key ingredient of this result.

The following example shows that there exist \emph{occulting }illuminations
$\g$ so that Theorem~\ref{thm:genericity} fails: the assumption on the admissibility
of $\g$ is crucial.
\begin{example}
\label{exa:occulting}For simplicity we consider the one-dimensional
case, but similar phenomena appear in higher dimensions too \cite{alberti2013multiple}.
Take $\Omega=(0,2)$, $a\equiv\q\equiv1$. The solution to \eqref{eq:helmholtz-genericity}
corresponding to the boundary value $\g\equiv1$ is $u_{\omega}^{\g}(x)=\cos(\sqrt{\omega}(x-1))/\cos(\sqrt{\omega})$,
whence
\[
\partial_{x}u_{\k}^{\g}(1)=0,\qquad\omega\in\R\setminus\Sigma.
\]
Therefore, in $x=1$ the constraint \eqref{eq:constraint} cannot
be satisfied for any choice of the frequency. By Theorem~\ref{thm:genericity},
$\g\equiv1$ cannot be admissible. To see this directly, observe that
$\phi_{l}'(1)=0$ for all $l\in2\N+1$ and $\phi_{l}'(0)=\phi_{l}'(2)$
for all $l\in2\N^{*}$, since $\phi_{l}(x)=\sin(l\pi x/2)$. This
implies that $(1,\psi_{l})_{L^{2}(\bo;\R)}\eig_{l}'(1)=0$ for every
$l\in\N^{*}$, namely $\g\equiv1$ is not admissible.
\end{example}
This example shows that the admissibility condition is not tautological:
there exist non admissible boundary values $\g$. However, the chosen
$g$ was in some sense pathological, as the following lemma shows.
\begin{lem}
\label{lem:admissible generic cuboid}If $\Omega=\prod_{i=1}^{3}(0,b_{i})$
for some $b_{i}>0$ and $a\equiv\q\equiv1$ in $\Omega$, then for
a generic $\g\in C^{1,\alpha}(\overline{\Omega};\R)$ and every $x\in\Omega$
there exists $\lambda\in\Sigma$ such that
\[
\sum_{l:\lambda_{l}=\lambda}(\g,\psi_{l})_{L^{2}(\bo;\R)}\partial_{x_{1}}\eig_{l}(x)\neq0.
\]

\end{lem}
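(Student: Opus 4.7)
The plan is to exploit the explicit Dirichlet eigenfunctions on the cube and conclude via Baire category.

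On $\Omega=\prod_{i=1}^{3}(0,b_{i})$ with $a\equiv\q\equiv 1$, the eigenfunctions are $\eig_{\mathbf{k}}(x)=c_{\mathbf{k}}\prod_{i}\sin(k_{i}\pi x_{i}/b_{i})$ with $\lambda_{\mathbf{k}}=\pi^{2}\sum_{i}k_{i}^{2}/b_{i}^{2}$, indexed by $\mathbf{k}\in(\N^{*})^{3}$. Let $\mathcal{K}(\lambda):=\{\mathbf{k}:\lambda_{\mathbf{k}}=\lambda\}$. The key observation is that no two distinct $\mathbf{k}\in\mathcal{K}(\lambda)$ can share the pair $(k_{2},k_{3})$, since $k_{1}^{2}/b_{1}^{2}=\lambda/\pi^{2}-k_{2}^{2}/b_{2}^{2}-k_{3}^{2}/b_{3}^{2}$ then determines $k_{1}$. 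Hence the traces $\psi_{\mathbf{k}}|_{x_{1}=0}$, each a non-zero multiple of $\sin(k_{2}\pi x_{2}/b_{2})\sin(k_{3}\pi x_{3}/b_{3})$ with pairwise distinct $(k_{2},k_{3})$, are linearly independent in $L^{2}(\{x_{1}=0\})$. It follows that the linear functional
\[
L_{x,\lambda}(\g):=\sum_{\mathbf{k}\in\mathcal{K}(\lambda)}(\g,\psi_{\mathbf{k}})_{L^{2}(\bo;\R)}\,\partial_{x_{1}}\eig_{\mathbf{k}}(x)
\]
on $C^{1,\alpha}(\overline{\Omega};\R)$ is non-trivial whenever $\partial_{x_{1}}\eig_{\mathbf{k}}(x)\neq 0$ for some $\mathbf{k}\in\mathcal{K}(\lambda)$. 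For any $x\in\Omega$, $\sin(\pi x_{i}/b_{i})\neq 0$ for $i=2,3$; moreover $\cos(\pi x_{1}/b_{1})$ and $\cos(2\pi x_{1}/b_{1})$ cannot simultaneously vanish (else $x_{1}/b_{1}$ would equal $1/2$ and lie in $\{1/4,3/4\}$). Taking $k_{2}=k_{3}=1$ and $k_{1}\in\{1,2\}$ appropriately, $\lambda=\lambda_{(k_{1},1,1)}$ makes $L_{x,\lambda}$ non-trivial on $C^{1,\alpha}(\overline{\Omega};\R)$.

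I conclude via Baire category. Let $K_{n}:=\{x\in\Omega:\mathrm{dist}(x,\bo)\geq 1/n\}$ and
\[
G_{n}:=\{\g\in C^{1,\alpha}(\overline{\Omega};\R):\forall x\in K_{n},\,\exists\lambda\in\Sigma,\,L_{x,\lambda}(\g)\neq 0\};
\]
the lemma's conclusion is $\g\in\bigcap_{n}G_{n}$. Openness of $G_{n}$ follows from compactness of $K_{n}$: any $\g\in G_{n}$ admits a finite family $\{(U_{j},\lambda_{j})\}_{j=1}^{N}$ with $\bigcup_{j}U_{j}\supseteq K_{n}$ and $|L_{\cdot,\lambda_{j}}(\g)|\geq\delta>0$ on $U_{j}\cap K_{n}$, and this is robust under small $C^{1,\alpha}$-perturbations of $\g$ by the linearity of $\g\mapsto L_{\cdot,\lambda_{j}}(\g)$.

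The main obstacle is density of each $G_{n}$. The plan is to work within the residual set $H:=\{\g:(\g,\psi_{\mathbf{k}})_{L^{2}(\bo;\R)}\neq 0 \text{ for all } \mathbf{k}\}$. On $H$, each $L_{\cdot,\lambda}(\g)$ equals $\partial_{x_{1}}f^{\lambda}_{\g}$ with $f^{\lambda}_{\g}:=\sum_{\mathbf{k}\in\mathcal{K}(\lambda)}(\g,\psi_{\mathbf{k}})_{L^{2}(\bo;\R)}\,\eig_{\mathbf{k}}$ a non-zero Dirichlet eigenfunction; this is a non-trivial real-analytic function on $\Omega$, since if $f^{\lambda}_{\g}$ were $x_{1}$-independent, its vanishing on $\{x_{1}=0\}$ would force $f^{\lambda}_{\g}\equiv 0$. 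Given $h\in H$, I would perturb along $\g_{t}=h+t\phi$ with $\phi\in H$; the first step applied to $\phi$ yields, for every $x\in\Omega$, some $\lambda$ with $L_{x,\lambda}(\phi)\neq 0$, so $t\mapsto L_{x,\lambda}(\g_{t})$ is affine with non-zero slope and vanishes at most at one $t$. Using a sufficiently redundant finite cover $\{(U_{j},\lambda_{j})\}$ of $K_{n}$ (with several $\lambda_{j}$'s active on each $U_{j}$), a real-analytic transversality argument applied to the family $(x,t)\mapsto L_{x,\lambda_{j}}(\g_{t})$ shows that the set of ``bad'' $t$'s with $\g_{t}\notin G_{n}$ is nowhere dense in $\R$, so $\g_{t}\in G_{n}$ for arbitrarily small $t$. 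The hardest point is making this transversality argument uniform over the compact $K_{n}$ and the finite collection of $\lambda_{j}$'s; this relies on the real-analytic structure of $x\mapsto L_{x,\lambda_{j}}(\g_{t})$ and on ensuring, via countably many generic conditions, that the analytic varieties $\{L_{\cdot,\lambda_{j}}(\g)=0\}$ have common intersection empty on $K_{n}$.
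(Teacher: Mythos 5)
Your step 1 is correct as far as it goes: within a single eigenspace $\mathcal{K}(\lambda)$ the pairs $(k_2,k_3)$ are pairwise distinct, the traces $\psi_{\mathbf{k}}|_{\{x_1=0\}}$ are linearly independent, and hence each functional $L_{x,\lambda}$ is non-trivial on $C^{1,\alpha}(\overline\Omega;\R)$ for a suitable $\lambda\in\{\lambda_{(1,1,1)},\lambda_{(2,1,1)}\}$; the openness of $G_n$ is also fine. The genuine gap is in the density step, at the sentence ``the first step applied to $\phi$ yields, for every $x\in\Omega$, some $\lambda$ with $L_{x,\lambda}(\phi)\neq0$.'' Step 1 yields no such thing: it produces, for each fixed $x$, \emph{some} $g$ (depending on $x$) with $L_{x,\lambda}(g)\neq0$, which is a statement about the functional, not about a fixed $\phi\in H$. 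Membership in $H$ only forbids the vanishing of the individual coefficients $(\phi,\psi_{\mathbf{k}})_{L^{2}(\bo;\R)}$; when $\mathcal{K}(\lambda)$ has several elements (on the unit cube, $\mathcal{K}(\lambda_{(2,1,1)})=\{(2,1,1),(1,2,1),(1,1,2)\}$), the sum $L_{x,\lambda}(\phi)$ can still vanish by cancellation among non-zero terms, so the claim does not follow. Worse, the claim you invoke \emph{is} the conclusion of the lemma for every element of the residual set $H$: if it were established, your entire Baire/transversality scaffolding would be superfluous (one would have $H\subseteq\bigcap_n G_n$ directly), and conversely that scaffolding cannot rescue the argument, since the transversality step is only gestured at (``a real-analytic transversality argument \dots\ shows'') and never carried out uniformly over $K_n$.

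The missing idea --- and the heart of the paper's proof --- is to choose, for each $x$, an eigenvalue for which the sum \emph{collapses to a single non-vanishing term}, so that cancellation is impossible. If $x_1\neq b_1/2$, take $\lambda=\lambda_{(1,1,1)}$, which is simple; then $L_{x,\lambda}(\g)=(\g,\psi_{(1,1,1)})_{L^{2}(\bo;\R)}\,c\,\tfrac{\pi}{b_1}\cos(\tfrac{\pi}{b_1}x_1)\prod_{i=2,3}\sin(\tfrac{\pi}{b_i}x_i)$ is a product of non-zero factors as soon as $(\g,\psi_{(1,1,1)})_{L^{2}(\bo;\R)}\neq0$. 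If $x_1=b_1/2$, take $\lambda=\lambda_{(2,1,1)}$: any $\mathbf{k}\in\mathcal{K}(\lambda)$ has $k_1\in\{1,2\}$, every term with $k_1=1$ vanishes identically on $\{x_1=b_1/2\}$ because $\cos(\pi/2)=0$, and $(2,1,1)$ is the unique element of $\mathcal{K}(\lambda)$ with $k_1=2$; hence $L_{x,\lambda}(\g)=-(\g,\psi_{(2,1,1)})_{L^{2}(\bo;\R)}\,c\,\tfrac{2\pi}{b_1}\prod_{i=2,3}\sin(\tfrac{\pi}{b_i}x_i)\neq0$ as soon as $(\g,\psi_{(2,1,1)})_{L^{2}(\bo;\R)}\neq0$. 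Thus every $\g$ in the open dense set $O_{(1,1,1)}(\Omega)\cap O_{(2,1,1)}(\Omega)$ satisfies the conclusion at every $x\in\Omega$ simultaneously, and no exhaustion, perturbation, or transversality argument is needed.
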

In the general case, there is a big class of problems for which the
admissibility is a generic property. We do not know if the admissibility
is always a generic property.
\begin{prop}
\label{prop:admissible is generic}If $\Omega\subseteq\R^{3}$ is
a $C^{1,\alpha}$ bounded domain, $a\in C^{2}(\overline{\Omega};\R^{3\times3})$
and $\q\in C^{1}(\overline{\Omega};\R)$ satisfy \eqref{eq:ass-ellipticity}
and all the eigenvalues $\{\lambda_{l}\}$ are simple, then a generic
$\g\in C^{1,\alpha}(\overline{\Omega};\R)$ is admissible.
\end{prop}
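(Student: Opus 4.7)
Since all eigenvalues are simple, the admissibility of $\g$ reduces to: for every $x \in \overline{\Omega}$ there exists $l \in \N^{*}$ with $(\g,\psi_l)_{L^2(\partial\Omega;\R)} \neq 0$ \emph{and} $\nabla\eig_l(x) \neq 0$. The plan is to show that
\[
B := \bigcap_{l \in \N^{*}} B_l, \qquad B_l := \bigl\{\g \in C^{1,\alpha}(\overline{\Omega};\R) : (\g,\psi_l)_{L^2(\partial\Omega;\R)} \neq 0\bigr\},
\]
is a dense $G_\delta$ subset of $C^{1,\alpha}(\overline{\Omega};\R)$ contained in the set of admissible boundary values, which immediately yields the proposition. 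Each $B_l$ is open by continuity of $\g \mapsto (\g,\psi_l)_{L^2(\partial\Omega;\R)}$, and density of $B_l$ is equivalent to $\psi_l \not\equiv 0$ on $\partial\Omega$: if $\psi_l \equiv 0$, then $\eig_l$ has vanishing Dirichlet \emph{and} Neumann data, and the boundary unique continuation theorem (valid under the assumed regularity of $a$, $\q$, $\partial\Omega$) forces $\eig_l \equiv 0$, contradicting $\lambda_l \in \Sigma$. It remains to prove: \emph{for every $x_0 \in \overline{\Omega}$ there exists $l \in \N^{*}$ with $\nabla\eig_l(x_0) \neq 0$}, i.e.\ the eigenfunctions have no common critical point.

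I would prove this by contradiction, splitting into interior and boundary cases. For $x_0 \in \Omega$, pick $u \in C_c^\infty(\Omega;\R)$ with $\nabla u(x_0) \neq 0$ and expand $u = \sum_l c_l \eig_l$ in the $L^2_\q$-basis. The smoothness and compact support of $u$ together with iterated elliptic regularity give rapid decay of the $c_l$ in $\lambda_l$; combined with standard polynomial-in-$\lambda_l$ pointwise bounds on $|\nabla\eig_l(x_0)|$, this yields absolute convergence of $\sum_l c_l \nabla\eig_l(x_0)$ together with its identification with $\nabla u(x_0)$. The hypothesis $\nabla\eig_l(x_0) = 0$ for all $l$ then gives $\nabla u(x_0) = 0$, a contradiction.

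For $x_0 \in \partial\Omega$, since $\eig_l|_{\partial\Omega} = 0$ the tangential derivatives vanish, and uniform ellipticity of $a$ gives $\nabla\eig_l(x_0) = 0 \iff \psi_l(x_0) = 0$. Assuming $\psi_l(x_0) = 0$ for every $l$, the spectral representation of the Poisson kernel for $L_\omega := -\div(a\nabla) - \omega\q$,
\[
P_\omega(y,z) = \sum_l \frac{\eig_l(y)\,\psi_l(z)}{\omega - \lambda_l}, \qquad y \in \Omega,\ z \in \partial\Omega,\ \omega \notin \Sigma,
\]
forces $P_\omega(\cdot,x_0) \equiv 0$ in $\Omega$. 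However, for any $\omega < \lambda_1$ the operator $L_\omega$ is coercive, and the maximum principle together with Hopf's boundary lemma yield $P_\omega(\cdot,x_0) > 0$ in $\Omega$, a contradiction. The main technical obstacle is the $C^1$-convergence justification in the interior case under only $C^2$ regularity of $a$; a cleaner alternative is to argue via the heat kernel $e_t(x,y) = \sum_l e^{-\lambda_l t}\eig_l(x)\eig_l(y)$, whose short-time Gaussian asymptotics are incompatible with $\nabla_x e_t(x_0,y) \equiv 0$ for all $y \in \Omega$ and $t > 0$.
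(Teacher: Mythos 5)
Your overall skeleton coincides with the paper's: with simple spectrum, admissibility of every $\g$ in $\bigcap_{l}B_{l}$ follows once (a) each $B_{l}$ (the paper's $O_{l}(\Omega)$) is open and dense --- which you argue exactly as the paper does, openness by continuity and density from $\psi_{l}\not\equiv0$ via unique continuation from Cauchy data --- and (b) for every $x_{0}\in\overline{\Omega}$ there is $l$ with $\nabla\varphi_{l}(x_{0})\neq0$. Both gaps are in (b), which is the paper's Lemma~\ref{lem:i star}. In the interior case, your primary argument needs \emph{absolute} convergence of $\sum_{l}c_{l}\nabla\varphi_{l}(x_{0})$, i.e.\ decay of $c_{l}$ beating the polynomial bound $\|\varphi_{l}\|_{C^{1}}\le c\lambda_{l}^{P}$ summed over Weyl-many eigenvalues; but ``iterated elliptic regularity'' is not available here: each iteration of the identity $c_{l}=\lambda_{l}^{-1}\bigl(\q^{-1}(-\div(a\nabla f)),\varphi_{l}\bigr)_{\ldr}$ costs two derivatives of $a$ and one of $\q$, so with $a\in C^{2}$, $\q\in C^{1}$ you can integrate by parts only once, giving $c_{l}=O(\lambda_{l}^{-3/2})$ up to a square-summable factor --- not enough to dominate $\lambda_{l}^{P}$. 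You flag this obstacle yourself, but your fallback via short-time heat-kernel gradient asymptotics is only asserted, and justifying those asymptotics at the stated regularity is heavier machinery than the whole proposition. The paper's resolution is different and stays exactly within the hypotheses: it never seeks absolute convergence. Defining $h\in\Vr$ by $-\div(a\nabla f)=\q h$, the tails $f_{n}=\sum_{l\ge n}(f,\varphi_{l})_{\ldr}\varphi_{l}$ solve $-\div(a\nabla f_{n})=\q h_{n}$ with $h_{n}=\sum_{l\ge n}(h,\varphi_{l})_{\ldr}\varphi_{l}\to0$ in $H^{1}$ (by orthogonality of $\{\varphi_{l}\}$ in $\Vr$), so a \emph{single} application of elliptic regularity gives $f_{n}\to0$ in $H^{3}\hookrightarrow C^{1}$; this is the missing idea in your interior step.

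The boundary case is the more serious gap. Evaluating the spectral series $P_{\omega}(y,z)=\sum_{l}\varphi_{l}(y)\psi_{l}(z)/(\omega-\lambda_{l})$ termwise at $z=x_{0}$ is unjustified: the coefficients decay only like $\lambda_{l}^{-1}$ while $\varphi_{l}(y)\psi_{l}(z)$ grows polynomially, so the series is not absolutely convergent, and pointwise convergence of eigenfunction expansions for elliptic operators on bounded domains is precisely the delicate issue the paper itself warns about in Section~\ref{sec:Towards-the-study} (citing Alimov). Hence you cannot deduce $P_{\omega}(\cdot,x_{0})\equiv0$ from $\psi_{l}(x_{0})=0$ for all $l$. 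The detour is also unnecessary: the paper handles boundary points in one line by applying the Hopf lemma to the principal eigenfunction $\varphi_{1}$, which can be taken positive in $\Omega$, vanishes on $\partial\Omega$, and satisfies $-\div(a\nabla\varphi_{1})=\lambda_{1}\q\varphi_{1}\ge0$, so $\partial_{\nu}\varphi_{1}(x_{0})\neq0$, i.e.\ $\nabla\varphi_{1}(x_{0})\neq0$. Note that your intended contradiction (positivity of the Poisson kernel) is itself Hopf's lemma applied to the Green's function; applying it directly to $\varphi_{1}$ removes the need for the kernel and its expansion altogether.
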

Therefore, if the spectrum is simple, Theorem~\ref{thm:genericity}
holds for a generic $\g\in C^{1,\alpha}(\overline{\Omega};\R)$.
Moreover, the simplicity of the spectrum is a generic property. Indeed,
for general elliptic operator on bounded domains $\Omega$, the eigenvalues
$\lambda_{l}$ may have multiplicity bigger than one (as in Lemma~\ref{lem:admissible generic cuboid}).
However, this is a consequence of special symmetries in $\Omega$
and in the parameters:  for a generic elliptic operator the eigenvalues
are all simple \cite{uhlenbeck-1976,albert-1978}. More precisely,
under the assumptions $a\in C^{2}(\R^{3};\R^{3\times3})$, $\q\in C^{2}(\R^{3};\R)$
and $a=a^{T}$, for a generic $F\in{\rm Diff}(\Omega)$, all the eigenvalues
of \eqref{eq:eigen-1} in $\Omega^{F}$ are simple \cite[Example 6.3]{henry_2005}.
Here, ${\rm Diff}(\Omega)$ denotes the open subset of $C^{2}(\Omega;\R^{3})$
consisting of maps $F$ which are diffeomorphisms to their images
$\Omega^{F}:=F(\Omega)$. This implies the following corollary.
\begin{cor}
\label{cor:genericity}Let $\Omega\subseteq\R^{3}$ be a $C^{2}$
bounded domain and $a\in C^{2}(\R^{3};\R^{3\times3})$ and $\q\in C^{2}(\R^{3};\R)$
be such that $a=a^{T}$ and such that \eqref{eq:ass-ellipticity} hold true.
For a generic $F\in{\rm Diff}(\Omega)$ and a generic $\g\in C^{1,\alpha}(\overline{\Omega^{F}};\R)$
there exist $n\in\N$ and a finite open cover
\[
\overline{\Omega^F}=\bigcup_{\omega\in K^{(n)}\setminus\Sigma}\Omega_{\omega}
\]
such that for every $\omega\in K^{(n)}\setminus\Sigma$ there holds
\[
|\nabla u_{\omega}^{\g}(x)|>0,\qquad x\in\Omega_{\omega},
\]
where $u_{\omega}^{\g}$ is the solution to \eqref{eq:helmholtz-genericity}
in $\Omega^{F}$ and $\Sigma$ is the corresponding spectrum.
\end{cor}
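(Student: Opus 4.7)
The plan is to chain together the three genericity results already stated or recalled in the paper. First, I invoke the result of Henry \cite{henry_2005} cited just above the statement: under the hypotheses $a=a^T$, $a\in C^2(\R^3;\R^{3\times 3})$, $\q\in C^2(\R^3;\R)$, the set
\[
\mathcal{F}=\{F\in{\rm Diff}(\Omega):\text{all Dirichlet eigenvalues of \eqref{eq:eigen-1} on }\Omega^F\text{ are simple}\}
\]
is residual in ${\rm Diff}(\Omega)$. This is where the symmetry assumption on $a$ and the $C^2$ regularity of the coefficients are essential.

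Next, I fix $F\in\mathcal{F}$ and transfer the hypotheses to the deformed domain. Since $\Omega$ is $C^2$ and $F$ is a $C^2$ diffeomorphism onto its image, $\Omega^F=F(\Omega)$ is a $C^2$, hence $C^{1,\alpha}$, bounded domain. The coefficients $a$ and $\q$, defined on all of $\R^3$, restrict to $\overline{\Omega^F}$ with regularity $C^2$ and $C^2\subseteq C^1$ respectively, and satisfy \eqref{eq:ass-ellipticity}. Moreover, by the choice of $F\in\mathcal{F}$, all eigenvalues on $\Omega^F$ are simple. Thus the hypotheses of Proposition~\ref{prop:admissible is generic} hold on $\Omega^F$, and we obtain that the admissible boundary values form a residual subset $\mathcal{G}_F\subseteq C^{1,\alpha}(\overline{\Omega^F};\R)$.

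Finally, for every $F\in\mathcal{F}$ and every $g\in\mathcal{G}_F$, the domain $\Omega^F$ together with $(a|_{\overline{\Omega^F}},\q|_{\Omega^F},g)$ satisfies the hypotheses of Theorem~\ref{thm:genericity}, which directly yields the integer $n\in\N$ and the open cover $\overline{\Omega^F}=\bigcup_{\omega\in K^{(n)}\setminus\Sigma}\Omega_\omega$ with $|\nabla u_\omega^{\g}(x)|>0$ on $\Omega_\omega$.

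The only real subtlety, rather than a genuine obstacle, lies in the logical structure of the quantifier \emph{``for a generic $F$ and a generic $g$''}: it must be read as a nested statement, with $\mathcal{F}$ residual in ${\rm Diff}(\Omega)$ and, for each $F\in\mathcal{F}$, the family $\mathcal{G}_F$ residual in the $F$-dependent Banach space $C^{1,\alpha}(\overline{\Omega^F};\R)$. Checking that the regularity and ellipticity of $a$ and $\q$ truly descend to every $\Omega^F$ is the only bookkeeping required; after that the corollary is essentially a formal concatenation of Henry's theorem, Proposition~\ref{prop:admissible is generic}, and Theorem~\ref{thm:genericity}.
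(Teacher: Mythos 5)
Your proposal is correct and follows exactly the paper's (implicit) argument: the paper derives this corollary precisely by combining Henry's theorem on generic simplicity of the spectrum under domain perturbation with Proposition~\ref{prop:admissible is generic} and Theorem~\ref{thm:genericity}. The bookkeeping you supply (regularity of $\Omega^F$, restriction of the coefficients, and the nested reading of the two genericity quantifiers) is exactly the detail the paper leaves to the reader.
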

\begin{rem}
By using a simple compactness argument \cite{alberti2013multiple},
it is possible to show that in Theorem~\ref{thm:genericity} and
Corollary~\ref{cor:genericity} we actually have
\[
|\nabla u_{\omega}^{\g}(x)|\ge C,\qquad\omega\in K^{(n)}\setminus\Sigma,\; x\in\Omega_{\omega}
\]
for some $C>0$. However, no quantitative a priori estimates on $n$
and $C$ can be given, since $n\to\infty$ and $C\to0$ as the chosen
$\g$ approaches a non admissible boundary condition. This is in contrast
with the results obtained in \cite{2014-albertigs}, where $n$ and
$C$ are given a priori, depending on $a$ and $\q$ only through
their a priori bounds.
\end{rem}

\begin{rem}
A precise estimate on the number of frequencies needed can be obtained
under the assumption of the analyticity of $a$ and $q$, by using
the result in \cite{alberti-capdeboscq-analytic}. More precisely,
if $a$ and $q$ are analytic, then the conclusions of Theorem~\ref{thm:genericity}
and Corollary~\ref{cor:genericity} hold true for any choice of four
frequencies in an open and dense set of $\A^{4}$.
\end{rem}

\begin{rem}
The results presented in this work state that, under certain spectral
assumptions, it is possible to control the behavior of $\nabla u_{\omega}^{\g}$
by choosing several ``controls'' $\omega\in\A$, provided that $\g$
is not pathological. The conditions on $\g$ are expressed by means
of  Fourier expansions. In these terms, the theory discussed here
is linked with the optimal observability for the heat and wave equations
discussed in \cite{privat-2013,privat-2015,privat2012optimal}, where
the optimality refers to random choices of the (Fourier coefficients
of the) initial data.
\end{rem}
We finally note that all the results presented in this paper are valid
in any dimension, provided that the regularity assumptions on the
coefficients are changed accordingly.

\section{\label{sec:Proof-of-Theorem}Proof of Theorem~\ref{thm:genericity}}

The proof of Theorem~\ref{thm:genericity} is based on the holomorphicity
of $u_{\k}^{\g}$ with respect to $\k$.
\begin{lem}
\label{lem:holomorphic}Let $\Omega\subseteq\R^{3}$ be a $C^{1,\alpha}$
bounded domain and $a\in C^{0,\alpha}(\overline{\Omega};\R^{3\times3})$
and $\q\in L^{\infty}(\Omega;\R)$ be such that \eqref{eq:ass-ellipticity}
holds true. Take $\g\in C^{1,\alpha}(\overline{\Omega};\R)$. Then
the map
\[
\k\in\C\setminus\Sigma\longmapsto u_{\k}^{\g}\in C^{1}(\overline{\Omega};\C)
\]
is holomorphic and for every $m\in\N^{*}$ there holds
\begin{equation}
\left\{ \begin{array}{l}
-\div(a\,\nabla(\partial_{\k}^{m}u_{\k}^{\g}))-\k\,\q\,\partial_{\k}^{m}u_{\k}^{\g}=m\q\partial_{\k}^{m-1}u_{\k}^{\g}\quad\text{\text{in \ensuremath{\Omega},}}\\
\partial_{\k}^{m}u_{\k}^{\g}=0\quad\text{on \ensuremath{\partial\Omega}.}
\end{array}\right.\label{eq:dk uk}
\end{equation}
\end{lem}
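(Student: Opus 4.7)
The plan is to reduce the boundary value problem to an operator equation with homogeneous Dirichlet data, identify the solution with the resolvent of a meromorphic operator family applied to an analytic right-hand side, and then upgrade Sobolev-valued holomorphicity to $C^1$-valued holomorphicity via Schauder estimates.

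First I would lift the boundary condition: let $G\in C^{1,\alpha}(\overline{\Omega};\R)$ be a fixed extension of $\g$ (one can take $G=\g$ itself if $\g$ is already defined on $\overline{\Omega}$) and set $v_{\k}=u_{\k}^{\g}-G$. Then $v_{\k}\in H_{0}^{1}(\Omega;\C)$ solves
\[
-\div(a\nabla v_{\k})-\k\q v_{\k}=\div(a\nabla G)+\k\q G=:f_{\k}\quad\text{in }\Omega,
\]
where the right-hand side is an affine, hence entire, function of $\k$ with values in $H^{-1}(\Omega;\C)$. Define the bounded operator $T_{\k}\colon H_{0}^{1}(\Omega;\C)\to H^{-1}(\Omega;\C)$ by $T_{\k}v=-\div(a\nabla v)-\k\q v$. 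By standard Fredholm theory, $T_{\k}$ is invertible precisely for $\k\in\C\setminus\Sigma$.

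Next I would show that $\k\mapsto T_{\k}^{-1}$ is holomorphic in the norm of $\mathcal{L}(H^{-1},H_{0}^{1})$ on $\C\setminus\Sigma$. Fix $\k_{0}\notin\Sigma$ and write $T_{\k}=T_{\k_{0}}-(\k-\k_{0})\q=T_{\k_{0}}\bigl(I-(\k-\k_{0})T_{\k_{0}}^{-1}\q\bigr)$, viewing multiplication by $\q$ as a bounded operator $H_{0}^{1}\to H^{-1}$. For $|\k-\k_{0}|$ small, the Neumann series yields
\[
T_{\k}^{-1}=\sum_{k\ge 0}(\k-\k_{0})^{k}\bigl(T_{\k_{0}}^{-1}\q\bigr)^{k}T_{\k_{0}}^{-1},
\]
proving holomorphicity. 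Composing with the entire map $\k\mapsto f_{\k}$ gives a holomorphic map $\k\mapsto v_{\k}\in H_{0}^{1}$, and therefore $\k\mapsto u_{\k}^{\g}\in H^{1}(\Omega;\C)$ is holomorphic on $\C\setminus\Sigma$.

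To upgrade this to $C^{1}(\overline{\Omega};\C)$-valued holomorphicity, I would use elliptic regularity. Given $\k_{0}\notin\Sigma$, for any closed disk $\overline{D}\subset\C\setminus\Sigma$ around $\k_{0}$ the Cauchy integral formula expresses the difference quotient
\[
\frac{u_{\k}^{\g}-u_{\k_{0}}^{\g}}{\k-\k_{0}}-\partial_{\k}u_{\k_{0}}^{\g}
\]
as a contour integral of $u_{\zeta}^{\g}$, which converges in $H^{1}$ as $\k\to\k_{0}$. Because each $u_{\zeta}^{\g}$ satisfies a Helmholtz equation with $C^{1,\alpha}$ data and $C^{0,\alpha}$ coefficients, Schauder estimates for Dirichlet problems provide a bound of the $C^{1,\alpha}(\overline{\Omega};\C)$ norm of $u_{\zeta}^{\g}$ in terms of its $L^{2}$ norm and $\|\g\|_{C^{1,\alpha}}$, uniformly for $\zeta\in\overline{D}$; the same bound applies to the difference quotient (with $\g$ replaced by $0$, since the boundary value is independent of $\k$). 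This promotes convergence in $H^{1}$ to convergence in $C^{1,\alpha}$ and hence in $C^{1}$, giving the desired holomorphicity.

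Finally I would derive \eqref{eq:dk uk} by induction on $m$. Differentiating the equation $-\div(a\nabla u_{\k}^{\g})-\k\q u_{\k}^{\g}=0$ once in $\k$ yields the case $m=1$; assuming \eqref{eq:dk uk} for $m-1$ and differentiating once more produces the term $\q\partial_{\k}^{m-1}u_{\k}^{\g}$ from the left-hand side plus the inductive right-hand side $(m-1)\q\partial_{\k}^{m-1}u_{\k}^{\g}$, whose sum is $m\q\partial_{\k}^{m-1}u_{\k}^{\g}$. The homogeneous Dirichlet condition on $\partial_{\k}^{m}u_{\k}^{\g}$ is immediate since $u_{\k}^{\g}|_{\bo}=\g$ does not depend on $\k$. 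The main obstacle is the regularity upgrade from $H^{1}$ to $C^{1}$; it is purely technical, but requires the Schauder estimates stated above with a constant locally uniform in $\k$.
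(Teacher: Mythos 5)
Your proof is correct, but it is organized quite differently from the paper's. Both arguments rest on a Neumann series, yet the paper (deferring the details to \cite[Proposition 3.5]{alberti2013multiple}) runs the series directly in $C^{1}(\overline{\Omega};\C)$: it expands $u_{\k'}^{\g}=\sum_{m}(BM_{\q})^{m}u_{\k}^{\g}(\k'-\k)^{m}$, where $B$ is the solution operator at frequency $\k$ with homogeneous Dirichlet data, regarded as a bounded map $L^{\infty}(\Omega;\C)\to C^{1}(\overline{\Omega};\C)$. With this choice of spaces the elliptic regularity is built into $B$ once and for all, no lifting of the boundary datum is needed (one perturbs around $u_{\k}^{\g}$ itself, which already carries the datum $\g$), and \eqref{eq:dk uk} is read off from the series coefficients: $\partial_{\k}^{m}u_{\k}^{\g}=m!(BM_{\q})^{m}u_{\k}^{\g}=B(m\q\,\partial_{\k}^{m-1}u_{\k}^{\g})$, which \emph{is} the stated Dirichlet problem by the definition of $B$, with no induction required. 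Your route --- resolvent in $\mathcal{L}(H^{-1}(\Omega;\C),H_{0}^{1}(\Omega;\C))$, boundary lifting, a posteriori upgrade to $C^{1}$-valued holomorphy, then induction on $m$ --- is sound, but note that the upgrade carries the real technical weight and is not literally ``Schauder'': since $\q$ is only $L^{\infty}$, the right-hand side $\zeta\q u_{\zeta}^{\g}$ is never better than $L^{\infty}$, so you need the gradient-H\"older estimates for divergence-form equations of \cite[Section 8.11]{gilbarg2001elliptic} (the same ones the paper invokes elsewhere), preceded by a Stampacchia/De Giorgi--Nash sup bound giving $\|u_{\zeta}^{\g}\|_{L^{\infty}(\Omega)}\le C$ uniformly on compact subsets of $\C\setminus\Sigma$ (the right-hand side involves the solution itself), and the resulting estimate must be applied to the difference-quotient error, whose data then tend to zero in $L^{\infty}$. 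With these standard ingredients your sketch closes. What each approach buys: yours is self-contained in the classical variational framework and makes explicit where every hypothesis enters; the paper's is shorter, avoids the regularity-upgrade step entirely, and produces the equations for all the $\k$-derivatives simultaneously.
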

\begin{proof}
The holomorphicity of the map $\k\in\C\setminus\Sigma\mapsto u_{\k}^{\g}\in C^{1}(\overline{\Omega};\C)$
is a consequence of the holomorphicity of the resolvent operator and
of classical elliptic regularity theory. The details are given in
\cite[Proposition 3.5]{alberti2013multiple}, where it is shown that
for $\omega,\omega'\in\C\setminus\Sigma$ sufficiently close there
holds
\[
u_{\k'}^{\g}=\sum_{m=0}^{\infty}(BC_{\k'-\k})^{m}u_{\k}^{\g},
\]
where $B\colon L^{\infty}(\Omega;\C)\to C^{1}(\overline{\Omega};\C)$
is defined by
\[
\left\{ \begin{array}{l}
-\div(a\,\nabla Bf)-\k\q Bf=f\qquad\text{in \ensuremath{\Omega},}\\
Bf=0\qquad\text{on \ensuremath{\partial\Omega},}
\end{array}\right.
\]
and $C_{h}w=hM_{\q}w=h\q w$. Therefore we have for all $\omega'$
sufficiently close to $\omega$
\[
u_{\k'}^{\g}=\sum_{m=0}^{\infty}(BC_{\k'-\k})^{m}u_{\k}^{\g}=\sum_{m=0}^{\infty}(BM_{\q})^{m}u_{\k}^{\g}(\k'-\k)^{m},
\]
whence $\partial_{\k}^{m}u_{\k}^{\g}=m!(BM_{\q})^{m}u_{\k}^{\g}=m(BM_{\q})\partial_{\k}^{m-1}u_{\k}^{\g}$.
Equivalently, we have shown that
\[
\partial_{\k}^{m}u_{\k}^{\g}=B(m\q\partial_{\k}^{m-1}u_{\k}^{\g}),
\]
 as desired.
\end{proof}
We need the following result on the asymptotic distribution of the
eigenvalues. The result is classical, and in the case $a\equiv\q\equiv1$
it is known as Weyl's lemma. We provide a proof for completeness.
\begin{lem}
\label{lem:weyl's lemma}Let $\Omega\subseteq\R^{3}$ be a $C^{1,\alpha}$
bounded domain and $a\in C^{0,\alpha}(\overline{\Omega};\R^{3\times3})$
and $\q\in L^{\infty}(\Omega;\R)$ be such that \eqref{eq:ass-ellipticity}
holds true. There exist $C_{1},C_{2}>0$ depending on $\Omega$ and
$\maxa$ such that
\[
C_{1}l^{\frac{2}{3}}\le\lambda_{l}\le C_{2}l^{\frac{2}{3}},\qquad l\in\N^{*}.
\]
\end{lem}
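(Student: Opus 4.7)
The plan is to combine the min-max (Courant-Fischer) characterisation of Dirichlet eigenvalues with classical ellipticity comparison and the Weyl asymptotics for the pure Laplacian, then transfer the latter from a cube via domain monotonicity. Writing
\[
R(u)=\frac{\int_{\Omega}a\nabla u\cdot\nabla u\,dx}{\int_{\Omega}\q\,u^{2}\,dx},\qquad R_{0}(u)=\frac{\int_{\Omega}|\nabla u|^{2}\,dx}{\int_{\Omega}u^{2}\,dx},
\]
the ellipticity assumption \eqref{eq:ass-ellipticity} immediately yields $\maxa^{-2}R_{0}(u)\le R(u)\le\maxa^{2}R_{0}(u)$ for every $u\in\Vr\setminus\{0\}$. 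Applying the min-max formula
\[
\lambda_{l}=\min_{\dim V=l}\max_{u\in V\setminus\{0\}}R(u),\qquad \mu_{l}=\min_{\dim V=l}\max_{u\in V\setminus\{0\}}R_{0}(u),
\]
where the minimum is taken over $l$-dimensional subspaces $V\subseteq\Vr$, gives $\maxa^{-2}\mu_{l}\le\lambda_{l}\le\maxa^{2}\mu_{l}$. This reduces the claim to showing $\mu_{l}\asymp l^{2/3}$ with constants depending only on $\Omega$.

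For the Laplacian eigenvalues I would use domain monotonicity. Since $\Omega$ is a bounded $C^{1,\alpha}$ domain, there exist open cubes $Q\subseteq\Omega\subseteq Q'$. Any $u\in H_{0}^{1}(Q)$ extends by zero to a function in $\Vr$ with identical Rayleigh quotient $R_{0}$, and symmetrically for $Q'$ and $H_{0}^{1}(Q')$. Feeding these embeddings into the min-max principle yields
\[
\mu_{l}(Q')\le\mu_{l}(\Omega)\le\mu_{l}(Q).
\]
On a cube of side $L$ the Dirichlet eigenvalues are explicit, namely $(\pi/L)^{2}(n_{1}^{2}+n_{2}^{2}+n_{3}^{2})$ with $n_{i}\in\N^{*}$. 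A standard lattice-point counting argument (counting the triples in a ball of radius $R$ in the positive octant) shows that the number of such eigenvalues below $t$ is comparable to $t^{3/2}$, so their $l$-th term satisfies $c\,l^{2/3}\le\mu_{l}(Q),\mu_{l}(Q')\le c'\,l^{2/3}$ with constants depending only on the side lengths. Combining these two chains of inequalities gives the desired bounds on $\lambda_{l}$.

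There is no serious obstacle: everything follows from well-known ingredients (min-max, domain monotonicity via extension by zero, explicit spectrum of the cube, and a Gauss-type lattice count). The only mild bookkeeping is tracking the dependence of constants on $\Omega$ and $\maxa$; this is automatic since the inscribed/circumscribed cubes depend only on $\Omega$ and the comparison between $R$ and $R_{0}$ depends only on $\maxa$. If one preferred to bypass the cube argument entirely, one could instead quote Weyl's asymptotic formula $\mu_{l}\sim(6\pi^{2})^{2/3}|\Omega|^{-2/3}l^{2/3}$ directly; but the cube reduction keeps the proof self-contained and elementary.
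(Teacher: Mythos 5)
Your proof is correct and follows essentially the same route as the paper: both apply the Courant--Fischer min-max principle together with the ellipticity bounds \eqref{eq:ass-ellipticity} to sandwich $\lambda_{l}$ between $\Lambda^{-2}\mu_{l}$ and $\Lambda^{2}\mu_{l}$, where $\mu_{l}$ are the Dirichlet eigenvalues of the Laplacian on $\Omega$. The only divergence is the last ingredient: the paper simply cites the literature for the Weyl-type bounds $c_{1}l^{2/3}\le\mu_{l}\le c_{2}l^{2/3}$, whereas you prove them self-containedly via domain monotonicity between inscribed and circumscribed cubes plus a lattice-point count, which is a perfectly valid and slightly more elementary way to finish.
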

\begin{proof}
Let $\mathfrak{F}_{l}$ denote the set of all $l$-dimensional subspaces
of $\Vr$. In view of the Courant\textendash{}Fischer\textendash{}Weyl
min-max principle \cite[Exercise 12.4.2]{schmudgen2012} we have $\lambda_{l}=\min_{D\in\mathfrak{F}_{l}}\max_{u\in D\setminus\{0\}}\frac{\int_{\Omega}a\nabla u\cdot\nabla u\, dx}{\int_{\Omega}\q u^{2}\, dx}$
for every $l\in\N^{*}$. Therefore we have
\begin{equation}
\Lambda^{-2}\mu_{l}\le\lambda_{l}\le\Lambda^{2}\mu_{l},\qquad l\in\N^{*},\label{eq:lambda_mu}
\end{equation}
where $\mu_{l}=\min_{D\in\mathfrak{F}_{l}}\max_{u\in D\setminus\{0\}}\frac{\int_{\Omega}\nabla u\cdot\nabla u\, dx}{\int_{\Omega}u^{2}\, dx}.$
By the min-max principle, $\mu_{l}$ are the eigenvalues of the Laplace
operator on $\Omega$, and so they satisfy $c_{1}l^{\frac{2}{d}}\le\mu_{l}\le c_{2}l^{\frac{2}{d}}$
for some $c_{1},c_{2}>0$ depending on $\Omega$ (see \cite[Theorem 12.14]{schmudgen2012}
or \cite[Chapter 5, Lemma 3.1]{kavian1993}). Combining this inequality
with \eqref{eq:lambda_mu} yields the result.
 \end{proof}
The following result is the main step of the proof of Theorem~\ref{thm:genericity}.
\begin{prop}
\label{prop: inclusion}Let $\Omega\subseteq\R^{3}$ be a $C^{1,\alpha}$
bounded domain and $a\in C^{0,\alpha}(\overline{\Omega};\R^{3\times3})$
and $\q\in L^{\infty}(\Omega;\R)$ be such that \eqref{eq:ass-ellipticity}
holds true and $\g\in C^{1,\alpha}(\overline{\Omega};\R)$ be admissible.
For every $x\in\overline{\Omega}$ there exists $\omega\in\C\setminus\Sigma$
such that $\nabla u_{\k}^{\g}(x)\neq0$.\end{prop}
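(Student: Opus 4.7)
Fix $x_{0}\in\overline{\Omega}$ and, for each $\lambda\in\Sigma$, set $h_{\lambda}:=\sum_{l:\lambda_{l}=\lambda}(\g,\psi_{l})_{L^{2}(\bo;\R)}\eig_{l}$. By admissibility of $\g$, one can choose $\lambda\in\Sigma$ with $\nabla h_{\lambda}(x_{0})\neq 0$. My plan is to show that, for this $\lambda$, the holomorphic map $\k\mapsto u_{\k}^{\g}\in\Cone$ from Lemma~\ref{lem:holomorphic} has a simple pole at $\k=\lambda$ with residue exactly $h_{\lambda}$. Once this is established, the $\C^{3}$-valued map $\k\mapsto\nabla u_{\k}^{\g}(x_{0})$ inherits a simple pole at $\lambda$ with nonzero leading coefficient $\nabla h_{\lambda}(x_{0})$, hence cannot be identically zero on $\C\setminus\Sigma$, which produces the desired $\k$.

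To compute the residue, I first derive the identity
\[
(\k-\lambda_{l})\int_{\Omega}\q\, u_{\k}^{\g}\,\eig_{l}\, dx=(\g,\psi_{l})_{L^{2}(\bo;\R)},\qquad \k\in\C\setminus\Sigma,\;l\in\N^{*},
\]
by pairing \eqref{eq:helmholtz-genericity} with $\eig_{l}$ and integrating by parts twice, using $-\div(a\nabla\eig_{l})=\lambda_{l}\q\eig_{l}$, $\eig_{l}|_{\bo}=0$, $u_{\k}^{\g}|_{\bo}=\g$, and $(a\nabla\eig_{l})\cdot\nu=\psi_{l}$. Next I verify that the pole of $u_{\k}^{\g}$ at each $\lambda\in\Sigma$ is at most simple: writing $u_{\k}^{\g}=\tilde{\g}+v_{\k}$ for a fixed $C^{1,\alpha}$-extension $\tilde{\g}$ of $\g$, the correction $v_{\k}\in H_{0}^{1}(\Omega;\C)$ solves $(-\div(a\nabla\cdot)-\k\q)v_{\k}=f(\k)$ with $f$ depending linearly on $\k$, and the resolvent of the self-adjoint operator $L=-\q^{-1}\div(a\nabla\cdot)$ on $L^{2}(\Omega;\q\, dx)$ has only simple poles, located exactly on $\Sigma$. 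Denote the $\Cone$-valued residue by $R_{\lambda}$; taking residues on both sides of \eqref{eq:helmholtz-genericity} yields $-\div(a\nabla R_{\lambda})=\lambda\q R_{\lambda}$ with $R_{\lambda}|_{\bo}=0$, so $R_{\lambda}\in\mathrm{span}\{\eig_{l}:\lambda_{l}=\lambda\}$. Multiplying the displayed identity by $(\k-\lambda)$ and sending $\k\to\lambda$ gives $\int_{\Omega}\q R_{\lambda}\eig_{l}\, dx=(\g,\psi_{l})_{L^{2}(\bo;\R)}$ for $\lambda_{l}=\lambda$ and $0$ otherwise; orthonormality of $\{\eig_{l}\}$ in the weighted $L^{2}$-space then forces $R_{\lambda}=h_{\lambda}$.

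The technical crux is controlling the order of the pole and upgrading the residue identity from the $L^{2}$-level to $\Cone$, so that $\nabla h_{\lambda}(x_{0})$ genuinely equals the residue of $\nabla u_{\k}^{\g}(x_{0})$; this combines self-adjointness of $L$ with the elliptic regularity already invoked in Lemma~\ref{lem:holomorphic}. With $R_{\lambda}=h_{\lambda}$ and $\nabla h_{\lambda}(x_{0})\neq 0$, we have $|\nabla u_{\k}^{\g}(x_{0})|\to\infty$ as $\k\to\lambda$, so in particular $\nabla u_{\k}^{\g}(x_{0})\neq 0$ for some $\k\in\C\setminus\Sigma$, as claimed.
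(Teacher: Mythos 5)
Your approach is correct, and it takes a genuinely different route from the paper's. The paper argues by contradiction: assuming $\nabla u_{\k}^{\g}(x_{0})=0$ for all $\k\in\C\setminus\Sigma$, it differentiates $m$ times in $\k$ so that the eigenfunction expansion of $\partial_{\k}^{m}u_{\k}^{\g}$ converges in $C^{1}(\overline{\Omega};\R)$ --- a step that requires the Weyl asymptotics of Lemma~\ref{lem:weyl's lemma}, the polynomial growth bounds $\Vert\eig_{l}\Vert_{C^{1}(\overline{\Omega};\R)}\le c\lambda_{l}^{P}$, and the estimate $|(\g,\psi_{l})_{L^{2}(\bo;\R)}|\le C\lambda_{l}\Vert\g\Vert_{H^{1/2}(\bo;\R)}$ --- and then sends $\k\to\lambda$, so that the block of the series selected by admissibility blows up like $(\lambda-\k)^{-(m+1)}$ while the remaining series stays bounded. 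Your residue analysis localizes at $\lambda$ from the outset: no Weyl law, no growth bounds on the $\eig_{l}$, no Fourier series convergence; the argument is direct rather than by contradiction, and it yields the quantitative conclusion $|\nabla u_{\k}^{\g}(x_{0})|\to\infty$ as $\k\to\lambda$. The price is exactly the step you flag as the crux: the simple pole and the identity ``residue $=h_{\lambda}$'' must be established in the $\Cone$ topology, not merely in $L^{2}$, because the statement is pointwise at $x_{0}$.

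That step does close with the tools you name, and here is how. Set $r_{\k}:=u_{\k}^{\g}-h_{\lambda}/(\k-\lambda)$. Using $-\div(a\nabla h_{\lambda})=\lambda\q h_{\lambda}$ one checks
\[
-\div(a\,\nabla r_{\k})-\k\q r_{\k}=\q h_{\lambda}\quad\text{in }\Omega,\qquad r_{\k}=\g\quad\text{on }\bo.
\]
Your $L^{2}$ computation (simple pole with residue exactly $h_{\lambda}$) says precisely that $r_{\k}$ stays bounded in $L^{2}$ as $\k\to\lambda$. Hence $-\div(a\nabla r_{\k})=\q h_{\lambda}+\k\q r_{\k}$ has right-hand side bounded in $L^{2}$, and since $2>3/2$ in dimension three, standard elliptic estimates bound $r_{\k}$ in $L^{\infty}$; then the $L^{\infty}\to\Cone$ regularity encoded in the operator $B$ of Lemma~\ref{lem:holomorphic} (with the fixed boundary datum $\g$) bounds $r_{\k}$ in $\Cone$ uniformly near $\lambda$. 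Since $\k\mapsto r_{\k}\in\Cone$ is holomorphic and bounded on a punctured neighbourhood of $\lambda$, the singularity is removable (vector-valued Riemann theorem), whence $\nabla u_{\k}^{\g}(x_{0})=(\k-\lambda)^{-1}\nabla h_{\lambda}(x_{0})+O(1)$, which is nonzero for $\k\notin\Sigma$ close to $\lambda$. Two minor wrinkles in your write-up: in the decomposition $u_{\k}^{\g}=\tilde{\g}+v_{\k}$, the forcing term contains $\div(a\nabla\tilde{\g})$, which for $\tilde{\g}\in C^{1,\alpha}$ lies only in $H^{-1}$, so the resolvent there must be read as a map $H^{-1}\to H_{0}^{1}(\Omega;\C)$ (its poles are still simple); this is avoided altogether by the resolvent identity $u_{\k}^{\g}=u_{\k_{0}}^{\g}+(\k-\k_{0})(L-\k)^{-1}u_{\k_{0}}^{\g}$ for a fixed $\k_{0}\in\R\setminus\Sigma$, which keeps everything in $L^{2}$. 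Also, the forcing $f(\k)$ is affine, not linear, in $\k$; this changes nothing.
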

\begin{proof}
We need the following estimate, which follows from classical elliptic
regularity theory \cite[Section 8.11]{gilbarg2001elliptic}. The eigenfunctions
$\eig_{l}$ $\in C^{1}(\overline{\Omega};\R)$ and
\begin{equation}
\left\Vert \eig_{l}\right\Vert _{C^{1}(\overline{\Omega};\R)}\le c\lambda_{l}^{P},\qquad l\in\N^{*}\label{eq:phi_i in C^1}
\end{equation}
for some $P,c>0$ depending on $\Omega$ and $\maxa$ only.

Take $v\in\Honer$ such that $\Delta v=0$ and $v=\g$ on $\bo$.
Standard integrations by parts yield $-(\g,\psi_{l})_{L^{2}(\partial\Omega;\R)}=\lambda_{l}(\eig_{l},v)_{\ldr}$,
whence
\begin{equation}
\bigl|(\g,\psi_{l})_{L^{2}(\partial\Omega;\R)}\bigr|\le\lambda_{l}\left\Vert \eig_{l}\right\Vert _{\ldr}\left\Vert v\right\Vert _{\ldr}\le C\lambda_{l}\left\Vert \g\right\Vert _{H^{1/2}(\bo;\R)}\label{eq:psi l g}
\end{equation}
for some $C>0$ depending only on $\Omega$ and $\maxa$.

By contradiction, assume that there exists $x\in\overline{\Omega}$
such that $\nabla u_{\k}^{\g}(x)=0$ for every $\k\in\C\setminus\Sigma$.
Therefore for any $m\in\N$ there holds 
\begin{equation}
\nabla(\partial_{\k}^{m}u_{\k}^{\g})(x)=0,\qquad\k\in\C\setminus\Sigma.\label{eq:2}
\end{equation}
By Definition~\ref{def:admissible}, there exists $\lambda\in\Sigma$
such that 
\begin{equation}
\sum_{l:\lambda_{l}=\lambda}(\g,\psi_{l})_{L^{2}(\bo;\R)}\nabla\eig_{l}(x)\neq0.\label{eq:1}
\end{equation}

From \eqref{eq:helmholtz-genericity} and \eqref{eq:dk uk} we have
for every $l\in\N^{*}$ and $\k\in(0,\lambda)\setminus\Sigma$ 
\begin{equation}
(\partial_{\k}^{m}u_{\k}^{\g},\eig_{l})_{\ldr}=\frac{m!}{(\lambda_{l}-\k)^{m}}(u_{\k}^{\g},\eig_{l})_{\ldr}=\frac{(-1)^{m}m!}{(\lambda_{l}-\k)^{m+1}}(\g,\psi_{l})_{L^{2}(\partial\Omega;\R)}.\label{eq:3}
\end{equation}
Therefore, for every  $\k\in(0,\lambda)\setminus\Sigma$ and $l>\max\{l'\in\N^{*}:\lambda_{l'}=\lambda\}$
there holds
\begin{equation}
\begin{split}\left\Vert (\partial_{\k}^{m}u_{\k}^{\g},\eig_{l})_{\ldr}\eig_{l}\right\Vert _{\Coner} & \le c(\Omega,\maxa)\left|(\partial_{\k}^{m}u_{\k}^{\g},\eig_{l})_{\ldr}\right|\lambda_{l}^{P}\\
 & \le c(\Omega,\maxa,m)\frac{\bigl|(\g,\psi_{l})_{L^{2}(\partial\Omega;\R)}\bigr|\lambda_{l}^{P}}{\left|\lambda_{l}-\k\right|{}^{m+1}}\\
 & \le c(\Omega,\maxa,m,\g)\frac{\lambda_{l}^{P+1}}{\left|\lambda_{l}-\k\right|{}^{m+1}}\\
 & \le c(\Omega,\maxa,m,\g)\frac{\lambda_{l}^{P+1}}{(\lambda_{l}-\lambda){}^{m+1}},
\end{split}
\label{eq:bound}
\end{equation}
where the first inequality follows from \eqref{eq:phi_i in C^1} and
the third inequality from \eqref{eq:psi l g}. In view of Lemma~\ref{lem:weyl's lemma},
it is possible to choose $m$ large enough so that we have the convergence
of the series
\[
\sum_{l:\lambda_{l}>\lambda}^{\infty}\frac{\lambda_{l}^{P+1}}{(\lambda_{l}-\lambda){}^{m+1}}<\infty.
\]
Therefore, by \eqref{eq:bound} we obtain that the Fourier series
$\partial_{\k}^{m}u_{\k}^{\g}=\sum_{l}(\partial_{\k}^{m}u_{\k}^{\g},\eig_{l})_{\ldr}\eig_{l}$
converges in $C^{1}(\overline{\Omega};\R)$. Hence, we have
\[
\nabla(\partial_{\k}^{m}u_{\k}^{\g})(x)=\sum_{l\in\N^{*}}(\partial_{\k}^{m}u_{\k}^{\g},\eig_{l})_{\ldr}\nabla\eig_{l}(x),\qquad\k\in(0,\lambda)\setminus\Sigma.
\]
As a result, combining \eqref{eq:2} and \eqref{eq:3} we have $0=\sum_{l}\frac{1}{(\lambda_{l}-\k)^{m+1}}(\g,\psi_{l})_{L^{2}(\partial\Omega;\R)}\nabla\eig_{l}(x)$,
whence
\[
\frac{-1}{(\lambda-\k)^{m+1}}\sum_{l:\lambda_{l}=\lambda}(\g,\psi_{l})_{L^{2}(\partial\Omega;\R)}\nabla\eig_{l}(x)=\sum_{l:\lambda_{l}\neq\lambda}\frac{1}{(\lambda_{l}-\k)^{m+1}}(\g,\psi_{l})_{L^{2}(\partial\Omega;\R)}\nabla\eig_{l}(x),
\]
for all $\k\in(0,\lambda)\setminus\Sigma$. The series on the right
hand side of this equality converges for $\omega=\lambda_{}$. Thus,
letting $\k\to\lambda$ we obtain a contradiction with \eqref{eq:1}.
\end{proof}
As a consequence of this result and of the holomorphicity of $\omega\mapsto u_{\omega}^{\g}$,
for every $x\in\overline{\Omega}$ the set $\{\k\in\A\setminus\Sigma:|\nabla u_{\omega}^{\g}|(x)=0\}$
is discrete. In other words, the critical set $\{x\in\overline{\Omega}:|\nabla u_{\omega}^{\g}|(x)=0\}$
``moves'' when the frequency changes. This is the main idea for
proving Theorem~\ref{thm:genericity}.

\begin{proof}[Proof of Theorem~\ref{thm:genericity}]
By construction of $K^{(m)}$, it is possible to choose $\k_{m}\in K^{(m)}\setminus\Sigma$
and $\k\in\A\setminus\Sigma$ such that $\k_{m}\to\k$ and $\k_{m}\neq\k$
for all $m\in\N$. Indeed, it is enough to choose $\k\in\A\setminus(\cup_{m}K^{(m)}\cup\Sigma)$
and use the fact that $\overline{\cup_{m}(K^{(m)}\setminus\Sigma)}=\A$.
By Lemma~\ref{lem:holomorphic}, for every $x\in\overline{\Omega}$
the map
\[
\zeta_{x}\colon\C\setminus\Sigma\to\C,\qquad\omega\mapsto\nabla u_{\k}^{\g}(x)\cdot\overline{\nabla u_{\overline{\k}}^{\g}}(x)
\]
is holomorphic. Since $\g$ is real-valued, there holds
\begin{equation}
\zeta_{x}(\omega)=|\nabla u_{\k}^{\g}(x)|^{2},\qquad\omega\in\R\setminus\Sigma.\label{eq:sigma real}
\end{equation}
By Proposition~\ref{prop: inclusion}, there exists $\omega^{*}\in\C\setminus\Sigma$
such that $\nabla u_{\k^{*}}^{\g}(x)\neq0$. As a consequence, $\partial_{x_{i}}u_{\k^{*}}^{\g}(x)\neq0$
for some $i$. Since $\omega\mapsto\partial_{x_{i}}u_{\k}^{\g}(x)$
is holomorphic by Lemma~\ref{lem:holomorphic}, we can assume without
loss of generality that $\omega^{*}\in\R$. Therefore, by \eqref{eq:sigma real}
we obtain $\zeta_{x}(\omega^{*})\neq0.$ We have shown that $\zeta_{x}$
is holomorphic and non-zero. Hence, by the analytic continuation theorem
there exists $m_{x}\in\N$ such that $\zeta_{x}(\omega_{m_{x}})\neq0$.
Thus, since $\omega_{m}$ is real for every $m$ by \eqref{eq:sigma real}
we obtain $|\nabla u_{\k_{m_{x}}}^{\g}|(x)\neq0$. As a result, since
$y\mapsto|\nabla u_{\k_{m_{x}}}^{\g}|(y)$ is continuous in $\overline{\Omega}$
there exists $r_{x}>0$ such that 
\begin{equation}
|\nabla u_{\k_{m_{x}}}^{\g}|(y)\neq0,\qquad y\in B(x,r_{x})\cap\overline{\Omega}.\label{eq:no name}
\end{equation}
Since $\overline{\Omega}=\cup_{x\in\overline{\Omega}}(B(x,r_{x})\cap\overline{\Omega})$
and $\overline{\Omega}$ is compact, there exist $x_{1},\dots,x_{N}\in\overline{\Omega}$
such that
\[
\overline{\Omega}=\bigcup_{i=1}^{N}\left(B(x_{i},r_{x_{i}})\cap\overline{\Omega}\right).
\]
Therefore, choosing $n=\max\{m_{x_{1}},\dots,m_{x_{N}}\}$ so that
$K^{(m_{x_{i}})}\subseteq K^{(n)}$ and defining for $\omega\in K^{(n)}\setminus\Sigma$
\[
\Omega_{\omega}=\begin{cases}
B(x_{i},r_{x_{i}})\cap\overline{\Omega} & \text{if \ensuremath{\omega=\omega_{m_{x_{i}}}}for some \ensuremath{i=1,\dots,N}, or}\\
\emptyset & \text{otherwise,}
\end{cases}
\]
the result follows by \eqref{eq:no name}.
\end{proof}

\section{\label{sec:The-genericity-of}The Genericity of the Admissibility
Property}

In this section we prove Proposition~\ref{prop:admissible is generic} and Lemma~\ref{lem:admissible generic cuboid}, that will be a
consequence of Lemmata~\ref{lem:case 1} and \ref{lem:case 2}.

The following lemma states that for every $x\in\overline{\Omega}$
there exists at least one eigenfunction with non-zero gradient in
$x$. The proof is based on the pointwise convergence of the Fourier
series of a compactly supported smooth function.
\begin{lem}
\label{lem:i star}Let $\Omega\subseteq\R^{3}$ be a $C^{1,\alpha}$
bounded domain and $a\in C^{2}(\overline{\Omega};\R^{3\times3})$
and $\q\in C^{1}(\overline{\Omega};\R)$ be such that \eqref{eq:ass-ellipticity}
holds true. For every $x\in\overline{\Omega}$ there exists $l\in\N^*$
such that $\nabla\eig_{l}(x)\neq0$.\end{lem}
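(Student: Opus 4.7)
The plan is to argue by contradiction: suppose there exists $x_0\in\overline{\Omega}$ with $\nabla\eig_l(x_0)=0$ for every $l\in\N^*$, and derive a contradiction by distinguishing the cases $x_0\in\bo$ and $x_0\in\Omega$.

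I would first dispose of the boundary case $x_0\in\bo$. The first Dirichlet eigenfunction $\eig_1$ is strictly positive in $\Omega$ (by Krein--Rutman), vanishes on $\bo$, and satisfies the divergence-form elliptic equation $-\div(a\nabla\eig_1)=\lambda_1\q\eig_1$. Since $\Omega$ is $C^{1,\alpha}$ an interior sphere condition holds at $x_0$, and Hopf's boundary lemma yields $\partial_\nu\eig_1(x_0)<0$; in particular $\nabla\eig_1(x_0)\neq 0$, already contradicting the standing hypothesis for $l=1$.

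For the substantive interior case $x_0\in\Omega$, I follow the hint and pick a test function $w\in C_c^\infty(\Omega;\R)$ with $\nabla w(x_0)\neq 0$, expanding $w=\sum_l c_l\,\eig_l$ in the eigenbasis with $c_l=(w,\eig_l)_{\ldr}$. Setting $Lu:=-\q^{-1}\div(a\,\nabla u)$, iterated integration by parts is unhindered because $w$ has compact support in $\Omega$ and every boundary contribution vanishes; one finds $\lambda_l^k c_l=(L^k w,\eig_l)_{\ldr}$ for every $k\in\N$, hence $|c_l|=O(\lambda_l^{-k})$ by Bessel's inequality. Combining this rapid decay with the polynomial growth $\|\eig_l\|_{\Coner}\le c\lambda_l^P$ from \eqref{eq:phi_i in C^1} and Weyl's asymptotics $\lambda_l\ge C_1 l^{2/3}$ from Lemma~\ref{lem:weyl's lemma}, for $k$ large enough the series $\sum_l c_l\,\eig_l$ converges absolutely in $\Coner$ to $w$. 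Evaluating the gradient at $x_0$ yields
\[
\nabla w(x_0)=\sum_l c_l\,\nabla\eig_l(x_0)=0,
\]
which contradicts the choice of $w$.

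The delicate technical point is the iterated integration by parts under the nominal regularity $a\in C^2$, $\q\in C^1$, since classically $L^k w$ only makes sense for small $k$. I would handle this cleanly via the spectral calculus of the self-adjoint Dirichlet realization of $L$: a compactly supported $w$ lies in the domain of every power $L^k$, so the identity $\sum_l\lambda_l^{2k}|c_l|^2=\|L^k w\|_{\ldr}^2<\infty$ is legitimate and produces the required arbitrary polynomial decay of the Fourier coefficients, which is the crux of the $\Coner$-convergence step.
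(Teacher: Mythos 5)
Your boundary case (Hopf's lemma applied to $\eig_{1}$) is fine and coincides with the paper's argument, and your overall plan for the interior case --- expand a test function $w\in C_{c}^{\infty}(\Omega;\R)$ with $\nabla w(x_{0})\neq0$ in the eigenbasis and show the expansion converges in $\Coner$ --- is also the paper's plan. The gap is in the mechanism you use to get $\Coner$-convergence. Your argument hinges on arbitrary polynomial decay $|c_{l}|=O(\lambda_{l}^{-k})$ for every $k$, which you justify by claiming that a compactly supported smooth $w$ lies in the domain of every power $L^{k}$ of the Dirichlet realization of $L=-\q^{-1}\div(a\nabla\cdot)$. Under the lemma's hypotheses $a\in C^{2}$, $\q\in C^{1}$ this claim is false, and the obstruction is not boundary terms (compact support indeed kills those) but coefficient regularity: $Lw=-\q^{-1}\bigl(\sum_{ij}a_{ij}\partial_{ij}w+\sum_{ij}(\partial_{i}a_{ij})\partial_{j}w\bigr)$ is only $C^{1}$, and $L(Lw)$ involves, distributionally, third derivatives of $a$ and second derivatives of $\q$, which need not be functions; so already $w\in D(L^{2})$ fails in general (take $a\equiv I$ and $\q\in C^{1}$ with nowhere-differentiable gradient). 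What one can legitimately extract is $\lambda_{l}c_{l}=(Lw,\eig_{l})_{\ldr}$ and then $\lambda_{l}^{2}c_{l}=\int_{\Omega}a\nabla(Lw)\cdot\nabla\eig_{l}\,dx$, giving only $|c_{l}|\lesssim\lambda_{l}^{-3/2}$; since the exponent $P$ in \eqref{eq:phi_i in C^1} is an unspecified positive constant, the series $\sum_{l}\lambda_{l}^{P-3/2}$ need not converge, so the absolute-convergence scheme cannot close. Your route would be correct if $a$ and $\q$ were $C^{\infty}$, but not at the stated regularity.

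The paper's proof is calibrated exactly to $a\in C^{2}$, $\q\in C^{1}$ and avoids iterated powers of $L$ altogether: it applies $L$ \emph{once}, setting $h=Lw\in\Vr$, observes that the tails $f_{n}=\sum_{l\ge n}c_{l}\eig_{l}$ solve $-\div(a\nabla f_{n})=\q h_{n}$ with $h_{n}=\sum_{l\ge n}(h,\eig_{l})_{\ldr}\eig_{l}$, and uses that $\{\eig_{l}\}$ is an orthogonal basis of $\Vr$ so that $h_{n}\to0$ in $\Vr$ \emph{with no rate needed at all}; a single elliptic lift ($H^{1}$ right-hand side gives $H^{3}$ solution, which is what $C^{2}$/$C^{1}$ coefficients buy) plus the Sobolev embedding $H^{3}(\Omega;\R)\hookrightarrow C^{1}$ then gives $f_{n}\to0$ in $\Coner$. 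If you want to keep your decay-based argument, you must either strengthen the hypotheses on $a$ and $\q$, or replace the rapid-decay step by this orthogonality-plus-regularity argument.
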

\begin{proof}
If $x\in\bo$, then by the Hopf lemma we have $\frac{\partial\phi_{1}}{\partial\nu}(x)\neq0$,
namely $\nabla\eig_{1}(x)\neq0$.

Assume now $x\in\Omega$. It is enough to show that if $f\in C_{0}^{\infty}(\Omega;\R)$
($C^{\infty}$ with compact support contained in $\Omega)$ then $f=\sum_{l}(f,\eig_{l})_{\ldr}\eig_{l}$
converges in $C^{1}(\overline{\Omega};\R)$. Then it suffices to take
$f\in C_{0}^{\infty}(\Omega;\R)$ such that $\nabla f(x)\neq0$.

Define $f_{n}=\sum_{l\ge n}(f,\eig_{l})_{\ldr}\eig_{l}\in\Vr$. We
shall show that $f_{n}\to0$ in $C^{1}(\overline{\Omega};\R)$. Let
$h\in\Vr$ be defined by $-\div(a\nabla f)=\q h$. A direct calculation
shows that
\[
-\div(a\nabla f_{n})=\q\, h_{n}\qquad\text{in \ensuremath{\Omega},}
\]
where $h_{n}=\sum_{l\ge n}(h,\eig_{l})_{\ldr}\eig_{l}$. Since $h\in\Vr$ and $\{\eig_{l}:l\in\N^{*}\}$
is an orthogonal basis for $\Vr$, we have the convergence $h_{n}\to0$
in $\Vr$. By the regularity assumption on $a$ and $\q$, this implies
$f_{n}\to0$ in $H^{3}(\Omega;\R)$. Finally, the result follows by
the Sobolev embedding theorem.
\end{proof}
The generic set of boundary conditions we are going to consider is
the intersection of the sets given by 
\[
O_{l}(\Omega)=\{\g\in C^{1,\alpha}(\overline{\Omega};\R):(\g,\psi_{l})_{L^{2}(\partial\Omega;\R)}\neq0\},\qquad l\in\N^{*}.
\]
We need the following elementary property.
\begin{lem}
\label{lem:O open dense}Let $\Omega\subseteq\R^{3}$ be a Lipschitz
bounded domain and $a\in C^{1}(\overline{\Omega};\R^{3\times3})$
and $\q\in L^{\infty}(\Omega;\R)$ be such that \eqref{eq:ass-ellipticity}
holds true. Then the set $O_{l}(\Omega)$ is open and dense in $C^{1,\alpha}(\overline{\Omega};\R)$
for every $l\in\N^{*}$.\end{lem}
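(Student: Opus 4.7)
The plan is to view $O_l(\Omega)$ as the preimage of $\R\setminus\{0\}$ under the linear functional
\[
T_l\colon C^{1,\alpha}(\overline{\Omega};\R)\to\R,\qquad T_l(\g)=(\g,\psi_l)_{L^{2}(\bo;\R)}.
\]
Since the trace map $\g\mapsto \g|_{\bo}$ is continuous from $C^{1,\alpha}(\overline{\Omega};\R)$ into $L^{2}(\bo;\R)$ (using that $|\bo|<\infty$), and $\psi_l\in C(\bo;\R)\subset L^{2}(\bo;\R)$, the functional $T_l$ is continuous, so openness of $O_l(\Omega)=T_l^{-1}(\R\setminus\{0\})$ is automatic. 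For density it suffices to exhibit a single $h_0\in C^{1,\alpha}(\overline{\Omega};\R)$ with $T_l(h_0)\neq 0$: any $\g_0$ with $T_l(\g_0)=0$ is then approximated in the $C^{1,\alpha}$ norm by $\g_0+\varepsilon h_0\in O_l(\Omega)$ as $\varepsilon\to 0$.

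The core step is therefore to show $\psi_l\not\equiv 0$ on $\bo$. Since $\eig_l$ vanishes on $\bo$, its tangential gradient also vanishes there, so $\nabla\eig_l=(\partial_\nu\eig_l)\,\nu$ on $\bo$, and consequently
\[
\psi_l=(a\nu\cdot\nu)\,\partial_\nu\eig_l\quad\text{on }\bo.
\]
By \eqref{eq:ass-ellipticity}, $a\nu\cdot\nu\ge\maxa^{-1}>0$, so $\psi_l\equiv 0$ would force $\partial_\nu\eig_l\equiv 0$ on $\bo$; combined with the Dirichlet condition $\eig_l=0$ on $\bo$, the full Cauchy data of $\eig_l$ would vanish. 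Unique continuation from the boundary for the second-order elliptic equation \eqref{eq:eigen-1}, valid for $C^{1}$ leading coefficients and $L^\infty$ lower-order terms (e.g.\ via Carleman estimates after a reflection across $\bo$), would then force $\eig_l\equiv 0$ in $\Omega$, contradicting $\|\eig_l\|_{\ldr}=1$.

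Once $\psi_l\not\equiv 0$, continuity of $\psi_l$ on $\bo$ yields some $x_0\in\bo$ with $\psi_l(x_0)\neq 0$. Choosing a nonnegative bump $\chi\in C^\infty(\bo;\R)$ supported in a small neighbourhood of $x_0$ on which $\psi_l$ keeps a constant sign, and then extending $\chi$ to some $h_0\in C^{1,\alpha}(\overline{\Omega};\R)$ with $h_0|_{\bo}=\chi$ via a standard trace/extension theorem (the $C^{1,\alpha}$ regularity of $\bo$ being sufficient), one obtains $T_l(h_0)=\int_{\bo}\chi\,\psi_l\,dS\neq 0$, as required. The only delicate ingredient in this plan is the appeal to boundary unique continuation under the mild regularity hypothesis $a\in C^{1}$, but this is a classical consequence of the Aronszajn–Cordes–Carleman theory for second-order elliptic operators with Lipschitz leading coefficients.
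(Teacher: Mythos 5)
Your proposal is correct and takes essentially the same route as the paper: openness from continuity of $\g\mapsto(\g,\psi_{l})_{L^{2}(\bo;\R)}$, and density by establishing $\psi_{l}\not\equiv0$ via unique continuation for the Cauchy problem (the paper cites this directly, where you derive it from the vanishing of the full Cauchy data of $\eig_{l}$) and then perturbing along a fixed $h$ with $(h,\psi_{l})_{L^{2}(\bo;\R)}\neq0$. The only difference is that you make explicit two steps the paper leaves implicit, namely the reduction $\psi_{l}=(a\nu\cdot\nu)\,\partial_{\nu}\eig_{l}$ on $\bo$ and the construction of $h$ by a boundary bump function plus extension.
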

\begin{proof}
Since the map $\g\mapsto(\g,\psi_{l})_{L^{2}(\partial\Omega;\R)}$
is continuous, the set $O_{l}(\Omega)$ is open. We now show that
it is dense. Take $\g\in C^{1,\alpha}(\overline{\Omega};\R)\setminus O_{l}(\Omega)$.
By the unique continuation for the Cauchy problem for elliptic equations
\cite[Theorem 1.7]{alessandrini_2009}, we have that $\psi_{l}\not\equiv0$.
Therefore, there exists $h\in C^{1,\alpha}(\overline{\Omega};\R)$
such that $(h,\psi_{l})_{L^{2}(\partial\Omega;\R)}\neq0$. As a consequence,
$\g_{n}=\g+h/n\in O_{l}(\Omega)$ and $\g_{n}\to\g$ in $C^{1,\alpha}(\overline{\Omega};\R)$.
\end{proof}
We now study the genericity of the admissibility condition in the
assumptions of Proposition~\ref{prop:admissible is generic}.
\begin{lem}
\label{lem:case 1}Let $\Omega\subseteq\R^{3}$ be a $C^{1,\alpha}$
bounded domain and $a\in C^{2}(\overline{\Omega};\R^{3\times3})$
and $\q\in C^{1}(\overline{\Omega};\R)$ be such that \eqref{eq:ass-ellipticity}
holds true. Assume that all the eigenvalues $\{\lambda_{l}\}$ are
simple and take $\g\in C^{1,\alpha}(\overline{\Omega};\R)$. If $\g\in\cap_{l\in\N^{*}}O_{l}(\Omega)$
then $g$ is admissible. In particular, by Lemma~\ref{lem:O open dense},
a generic $\g\in C^{1,\alpha}(\overline{\Omega};\R)$ is admissible.\end{lem}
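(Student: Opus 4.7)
The plan is to exploit simplicity of the spectrum to collapse the admissibility sum to a single term, so that admissibility reduces to the two ingredients already isolated above, namely Lemma~\ref{lem:i star} (existence of an eigenfunction with non-vanishing gradient at each point) and membership in the sets $O_l(\Omega)$ (non-vanishing boundary pairings).

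More precisely, I would first unwind Definition~\ref{def:admissible} in the simple-spectrum case. Since every eigenvalue $\lambda_l$ has multiplicity one, for each $\lambda\in\Sigma$ the indexing set $\{l:\lambda_l=\lambda\}$ is a singleton, and the admissibility condition at a point $x\in\overline{\Omega}$ becomes the existence of some $l\in\N^{*}$ with
\[
(\g,\psi_{l})_{L^{2}(\bo;\R)}\,\nabla\eig_{l}(x)\neq0.
\]
Fix $x\in\overline{\Omega}$. By Lemma~\ref{lem:i star} there exists $l_{x}\in\N^{*}$ such that $\nabla\eig_{l_{x}}(x)\neq0$. By hypothesis, $\g\in O_{l_{x}}(\Omega)$, so $(\g,\psi_{l_{x}})_{L^{2}(\bo;\R)}\neq0$. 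Multiplying the two non-zero quantities yields the desired relation, so $\g$ is admissible.

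For the genericity statement, Lemma~\ref{lem:O open dense} asserts that each $O_{l}(\Omega)$ is open and dense in $C^{1,\alpha}(\overline{\Omega};\R)$. The countable intersection $\bigcap_{l\in\N^{*}}O_{l}(\Omega)$ is therefore a residual (dense $G_{\delta}$) subset by Baire's theorem, and by the argument above every element of it is admissible. Hence a generic $\g\in C^{1,\alpha}(\overline{\Omega};\R)$ is admissible.

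I do not anticipate any real obstacle: the whole content of the lemma is a bookkeeping exercise combining the two previous lemmas once the simple-spectrum hypothesis is used to kill the sum in Definition~\ref{def:admissible}. The only subtlety is making sure the choice $l_{x}$ coming from Lemma~\ref{lem:i star} is indeed one of the indices for which we have control via $O_{l}(\Omega)$, but this is immediate since we assumed $g$ lies in the intersection of $O_{l}(\Omega)$ over \emph{all} $l\in\N^{*}$.
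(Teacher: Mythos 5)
Your proof is correct and follows exactly the paper's own argument: simplicity of the spectrum collapses the admissibility sum to a single term, Lemma~\ref{lem:i star} supplies an index $l$ with $\nabla\eig_{l}(x)\neq0$, and membership in $O_{l}(\Omega)$ gives $(\g,\psi_{l})_{L^{2}(\bo;\R)}\neq0$, so the product is non-zero. The concluding Baire-category remark on the residual set $\cap_{l}O_{l}(\Omega)$ is also precisely how the paper deduces genericity from Lemma~\ref{lem:O open dense}.
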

\begin{proof}
Assume that $\g\in\cap_{l\in\N^{*}}O_{l}(\Omega)$ and fix $x\in\overline{\Omega}$.
Since the spectrum is simple, we need to show that there exists $l\in\N^{*}$
such that $(g,\psi_{l})_{L^{2}(\bo;\R)}\nabla\eig_{l}(x)\neq0$. By
Lemma~\ref{lem:i star}, there exists $l\in\N^{*}$ such that $\nabla\eig_{l}(x)\neq0$.
Moreover, since $g\in O_{l}(\Omega)$, there holds $(g,\psi_{l})_{L^{2}(\bo;\R)}\neq0$,
whence $(g,\psi_{l})_{L^{2}(\bo;\R)}\nabla\eig_{l}(x)\neq0$, as desired.
\end{proof}
We now study the genericity of the admissibility condition in the
case discussed in Lemma~\ref{lem:admissible generic cuboid}.
\begin{lem}
\label{lem:case 2}Let $b_{1},b_{2},b_{3}>0$, set $\Omega=\prod_{i=1}^{3}(0,b_{i})$,
$a\equiv\q\equiv1$ and take $\g\in C^{1,\alpha}(\overline{\Omega};\R)$.
If $\g\in\cap_{l\in\N^{*}}O_{l}(\Omega)$ then for every $x\in\Omega$
there exists $\lambda\in\Sigma$ such that
\[
\sum_{l:\lambda_{l}=\lambda}(\g,\psi_{l})_{L^{2}(\bo;\R)}\nabla\eig_{l}(x)\neq0.
\]
In particular, by Lemma~\ref{lem:O open dense}, the conclusion of
Lemma~\ref{lem:admissible generic cuboid} holds true for a generic
$\g\in C^{1,\alpha}(\overline{\Omega};\R)$.\end{lem}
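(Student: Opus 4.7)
Set $r_{i}=x_{i}/b_{i}\in(0,1)$. In this setting the Dirichlet eigenfunctions factor: up to a constant normalisation, $\eig_{\mathbf{k}}(y)=\prod_{i=1}^{3}\sin(k_{i}\pi y_{i}/b_{i})$ for $\mathbf{k}=(k_{1},k_{2},k_{3})\in(\N^{*})^{3}$, with eigenvalues $\lambda_{\mathbf{k}}=\pi^{2}\sum_{i}(k_{i}/b_{i})^{2}$. The plan is to exhibit, for each $x\in\Omega$, an explicit $\lambda\in\Sigma$ such that the first component of the eigenspace sum $\sum_{\mathbf{k}':\lambda_{\mathbf{k}'}=\lambda}(g,\psi_{\mathbf{k}'})_{L^{2}(\bo;\R)}\nabla\eig_{\mathbf{k}'}(x)$ is non-zero, splitting into two cases according to whether $r_{1}=1/2$.

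If $r_{1}\neq1/2$, take $\lambda=\lambda_{(1,1,1)}$, the smallest eigenvalue of the problem. Because $\sum_{i}(k_{i}/b_{i})^{2}$ is strictly minimised at $(1,1,1)$ over $(\N^{*})^{3}$, this eigenvalue is simple. Then $\partial_{y_{1}}\eig_{(1,1,1)}(x)=(\pi/b_{1})\cos(\pi r_{1})\sin(\pi r_{2})\sin(\pi r_{3})\neq0$ (since $r_{i}\in(0,1)$ for $i=2,3$ and $\cos(\pi r_{1})\neq0$), and $(g,\psi_{(1,1,1)})_{L^{2}(\bo;\R)}\neq0$ by the assumption $g\in\bigcap_{l}O_{l}(\Omega)$, so the one-term sum does not vanish.

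If $r_{1}=1/2$, the above choice fails because $\cos(\pi/2)=0$; instead take $\mathbf{k}=(2,1,1)$ and $\lambda_{*}=\lambda_{(2,1,1)}$. Here $\partial_{y_{1}}\eig_{(2,1,1)}(x)=-(2\pi/b_{1})\sin(\pi r_{2})\sin(\pi r_{3})\neq0$. The main step will be to check that every other $\mathbf{k}'=(k_{1}',k_{2}',k_{3}')$ satisfying $\lambda_{\mathbf{k}'}=\lambda_{*}$ contributes zero to the first component at $x$. From the eigenvalue identity and the bound $(k_{i}')^{2}\geq1$ for $i=2,3$, one obtains $(k_{1}')^{2}\leq4$, so $k_{1}'\in\{1,2\}$; the option $k_{1}'=2$ forces $(k_{2}')^{2}/b_{2}^{2}+(k_{3}')^{2}/b_{3}^{2}=1/b_{2}^{2}+1/b_{3}^{2}$ and hence $(k_{2}',k_{3}')=(1,1)$, contradicting $\mathbf{k}'\neq(2,1,1)$. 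Therefore $k_{1}'=1$, which is odd, and $\cos(k_{1}'\pi/2)=\cos(\pi/2)=0$ kills $\partial_{y_{1}}\eig_{\mathbf{k}'}(x)$. The first component of the eigenspace sum thus collapses to $(g,\psi_{(2,1,1)})_{L^{2}(\bo;\R)}\cdot(-2\pi/b_{1})\sin(\pi r_{2})\sin(\pi r_{3})\neq0$.

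The only non-routine step is the multiplicity analysis at $r_{1}=1/2$, but it is handled cleanly by the bound $(k_{1}')^{2}\leq4$, which uses nothing more than $k_{i}'\geq1$ for all $i$. Because the argument produces a non-vanishing first partial derivative of the eigenspace sum (rather than merely a non-zero gradient), it simultaneously yields the $\partial_{x_{1}}$-version stated in Lemma~\ref{lem:admissible generic cuboid}, and the genericity of such $g$ in $C^{1,\alpha}(\overline{\Omega};\R)$ then follows via Lemma~\ref{lem:O open dense}.
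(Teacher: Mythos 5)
Your proposal is correct and follows essentially the same route as the paper: the same case split on $x_{1}=b_{1}/2$ versus $x_{1}\neq b_{1}/2$, the same choice of eigenvalues $\lambda_{(1,1,1)}$ and $\lambda_{(2,1,1)}$, and the same collapse of the eigenspace sum to a single non-vanishing term via $\cos(\pi/2)=0$ and the constraint $k_{1}'\in\{1,2\}$. Your write-up is in fact slightly more explicit than the paper's on the two points it leaves implicit — the simplicity of the first eigenvalue and the fact that $k_{1}'=2$ forces $(k_{2}',k_{3}')=(1,1)$.
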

\begin{proof}
The eigenvalues and the eigenfunctions of the Laplacian in a rectangle
are well known and are given by
\[
\lambda_{l}=\sum_{i=1}^{3}\left(\frac{l_{i}\pi}{b_{i}}\right)^{2},\quad\eig_{l}(x)=c\prod_{i=1}^{3}\sin(\frac{l_{i}\pi}{b_{i}}x_{i}),\qquad l\in(\N^{*})^{3},
\]
where $c=b_{1}b_{2}b_{3}/8$ is a normalization factor. Assume that
$g\in\cap_{l\in(\N^{*})^{3}}O_{l}(\Omega)$; in particular, we have
\[
(\g,\psi_{(1,1,1)})_{L^{2}(\partial\Omega;\R)}\neq0\quad\text{and}\quad(\g,\psi_{(2,1,1)})_{L^{2}(\partial\Omega;\R)}\neq0.
\]
Fix now $x\in\Omega$. We need to show that there exists $\lambda\in\Sigma$
such that
\[
\sum_{l:\lambda_{l}=\lambda}(\g,\psi_{l})_{L^{2}(\bo;\R)}\nabla\eig_{l}(x)\neq0.
\]

If $x_{1}\neq b_{1}/2$, by choosing $\lambda=\lambda_{(1,1,1)}$
we have
\[
\begin{split}\sum_{l:\lambda_{l}=\lambda}(\g,\psi_{l})_{L^{2}(\bo;\R)}\partial_{x_{1}}\eig_{l}(x) & =(\g,\psi_{(1,1,1)})_{L^{2}(\bo;\R)}\partial_{x_{1}}\eig_{(1,1,1)}(x)\\
 & =(\g,\psi_{(1,1,1)})_{L^{2}(\bo;\R)}c\frac{\pi}{b_{1}}\cos(\frac{\pi}{b_{1}}x_{1})\prod_{i=2}^{3}\sin(\frac{\pi}{b_{i}}x_{i})\\
 & \neq0,
\end{split}
\]
as desired.

If $x_{1}=b_{1}/2$, by choosing $\lambda=\lambda_{(2,1,1)}$ we have
\[
\begin{split}\sum_{l:\lambda_{l}=\lambda}(\g,\psi_{l})_{L^{2}(\bo;\R)}\partial_{x_{1}}\eig_{l}(x) & =\sum_{l:\lambda_{l}=\lambda}(\g,\psi_{l})_{L^{2}(\bo;\R)}c\frac{l_{1}\pi}{b_{1}}\cos(\frac{l_{1}\pi}{2})\prod_{i=2}^{3}\sin(\frac{l_{i}\pi}{b_{i}}x_{i})\\
 & =-(\g,\psi_{(2,1,\dots,1)})_{L^{2}(\bo;\R)}c\frac{2\pi}{b_{1}}\prod_{i=2}^{3}\sin(\frac{\pi}{b_{i}}x_{i})\\
 & \neq0,
\end{split}
\]
where the second equality comes from the fact that $\cos(\frac{l_{1}\pi}{2})=0$
if $l_{1}=1$ (note that $\lambda_{l}=\lambda_{(2,1,1)}$ implies
$l_{1}\in\{1,2\}$). This concludes the proof.
\end{proof}
\begin{rem}
In fact, we have proven that it is sufficient that $\g$ belongs to
$O_{(1,1,1)}(\Omega)\cap O_{(2,1,1)}(\Omega)$ in order to be admissible.
\end{rem}

\section{\label{sec:Towards-the-study}Towards the Study of the Jacobian for Systems of Equations}

The study of the absence of critical points was strongly related with the more
general problem of the Jacobian. As mentioned in the Introduction,
the proof of the Rad\'o-Kneser-Choquet theorem in two dimensions is
based on the absence of critical points for solutions with certain
boundary values. Let us summarize the argument for the case of the
conductivity equation
\[
\left\{ \begin{array}{l}
-\div(a\nabla u^{i})=0\quad\text{in }\Omega,\\
u^{i}=g_{i}\quad\text{on }\partial\Omega,
\end{array}\right.
\qquad i=1,2.
\]
For simplicity, assume that $\Omega$ is convex and choose boundary
values $\g_{i}=x_{i}$ for $i=1,2$. If $\det(\nabla u^{1},\nabla u^{2})(x)=0$
for some $x\in\Omega$, then $\alpha\nabla u^{1}(x)+\beta\nabla u^{2}(x)=0$
for some $\alpha,\beta\in\R$. Setting $u=\alpha u^{1}+\beta u^{2}$,
we have $\nabla u(x)=0$ and
\[
\left\{ \begin{array}{l}
-\div(a\nabla u)=0\quad\text{in }\Omega,\\
u=\alpha x_{1}+\beta x_{2}\quad\text{on }\partial\Omega.
\end{array}\right.
\]
However, $u$ cannot have any critical points in $\Omega$, since
$\alpha x_{1}+\beta x_{2}$ has only one minimum and one maximum on
$\bo$ \cite{alessandrinimagnanini1994}.

Thus, proving the absence of points with vanishing Jacobian boils
down to proving an a priori weaker property on the absence of critical
points for solutions with boundary values satisfying certain conditions.
As a consequence, studying the absence of critical points represents
a natural first step for the study of the Jacobian. Unfortunately,
deducing directly the absence of points with a vanishing Jacobian
from Theorem~\ref{thm:genericity} following the argument outlined
above does not appear to be possible  because of the structure of the admissibility condition.

On the other hand, at the current state we are unable to consider
the Jacobian instead of the constraint $\nabla u_{\omega}^{\g}$ in
Theorem~\ref{thm:genericity}. This can be seen from the proof of
Proposition~\ref{prop: inclusion}. Indeed, the trick of choosing
a higher order derivative with respect to $\omega$ in order to make
the Fourier series of $\nabla(\partial_{\k}^{m}u_{\k}^{\g})$ pointwise
convergent does not work in the case of the Jacobian. In general,
the problem of the pointwise convergence of the Fourier series associated
to an elliptic operator in a bounded domain is a very delicate issue
\cite{alimov-i,alimov-II}.

One may think that the apparent difficulties in the study of the Jacobian
may be related to the fact that, in general, the Jacobian does not
fulfill the property of unique continuation \cite{jin-kazdan-1991,alessandrini-nesi-2015},
in contrast to the full gradient \cite{garofalo-lin-1986}. However,
this is not the obstacle for this approach. Indeed, as we shall briefly
see below, the analogous of all the results stated in Section~\ref{sec:Main-result}
hold true for the stronger constraint
\begin{equation}
|\partial_{x_{1}}u_{\omega}^{\g}(x)|>0,\label{eq:simple contrs-1}
\end{equation}
for which the property of unique continuation fails \cite{jin-kazdan-1991}.

Let us for simplicity consider the above constraint only in a subset
$\Omega'\Subset\Omega$. We say that $\g\in C^{1,\alpha}(\overline{\Omega};\R)$
is \emph{strongly admissible} if for every $x\in\Omega$ there exists
$\lambda\in\Sigma$ such that
\[
\sum_{l:\lambda_{l}=\lambda}(\g,\psi_{l})_{L^{2}(\bo;\R)}\partial_{x_{1}}\eig_{l}(x)\neq0.
\]
By Lemma~\ref{lem:admissible generic cuboid}, the strong admissibility
is a generic property in a cuboid with constant coefficient. The proof
of Proposition~\ref{prop:admissible is generic} can be easily extended
to this more general case. Thus, the strong admissibility property
is a generic property if the spectrum is simple. In particular, by
arguing as in Corollary~\ref{cor:genericity}, this is true for a
generic $C^{2}$ bounded domain $\Omega$.

A careful inspection of the proof of Theorem~\ref{thm:genericity}
shows that it can be generalized to the constraint given by \eqref{eq:simple contrs-1},
under the assumption of strong admissibility.
\begin{prop}
Let $\Omega\subseteq\R^{3}$ be a $C^{1,\alpha}$ bounded domain,
$a\in C^{0,\alpha}(\overline{\Omega};\R^{3\times3})$ and $\q\in L^{\infty}(\Omega;\R)$
be such that \eqref{eq:ass-ellipticity} holds true and $\g\in C^{1,\alpha}(\overline{\Omega};\R)$
be strongly admissible. Take $\Omega'\Subset\Omega$. There exists
$n\in\N$ and a finite open cover
\[
\overline{\Omega'}=\bigcup_{\omega\in K^{(n)}\setminus\Sigma}\Omega_{\omega}
\]
such that for every $\omega\in K^{(n)}\setminus\Sigma$ there holds
\[
|\partial_{x_{1}}u_{\omega}^{\g}(x)|>0,\qquad x\in\Omega_{\omega}.
\]

\end{prop}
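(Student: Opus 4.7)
The plan is to mirror the proof of Theorem~\ref{thm:genericity} step by step, replacing the full gradient $\nabla u_\omega^\g$ by the single directional derivative $\partial_{x_1} u_\omega^\g$ throughout. The argument has two parts: an analogue of Proposition~\ref{prop: inclusion} giving pointwise non-vanishing at some (possibly complex) frequency for every interior point, and the holomorphicity plus compactness machinery that extracts a finite open cover of $\overline{\Omega'}$.

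First, I would establish the pointwise claim: for every $x\in\Omega$ there exists $\omega\in\C\setminus\Sigma$ with $\partial_{x_1} u_\omega^\g(x)\neq 0$. The argument is a direct copy of Proposition~\ref{prop: inclusion}. Assume by contradiction that $\partial_{x_1} u_\omega^\g(x)=0$ for every $\omega\in\C\setminus\Sigma$; differentiating in $\omega$ via Lemma~\ref{lem:holomorphic} gives $\partial_{x_1}(\partial_\omega^m u_\omega^\g)(x)=0$ for every $m\in\N$. The same three ingredients used in \eqref{eq:bound} --- the elliptic bound \eqref{eq:phi_i in C^1}, the boundary-pairing estimate \eqref{eq:psi l g}, and Weyl's asymptotics from Lemma~\ref{lem:weyl's lemma} --- show that for $m$ large enough the Fourier series $\partial_\omega^m u_\omega^\g=\sum_l(\partial_\omega^m u_\omega^\g,\eig_l)_{\ldr}\eig_l$ converges in $C^1(\overline{\Omega};\R)$, so termwise application of $\partial_{x_1}$ is justified. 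Using \eqref{eq:3} and isolating the cluster $\{l:\lambda_l=\lambda\}$ yields, for $\omega\in(0,\lambda)\setminus\Sigma$,
\[
\frac{-1}{(\lambda-\omega)^{m+1}}\sum_{l:\lambda_l=\lambda}(\g,\psi_l)_{L^2(\bo;\R)}\partial_{x_1}\eig_l(x)=\sum_{l:\lambda_l\neq\lambda}\frac{1}{(\lambda_l-\omega)^{m+1}}(\g,\psi_l)_{L^2(\bo;\R)}\partial_{x_1}\eig_l(x).
\]
The right-hand side stays bounded as $\omega\to\lambda$, forcing the cluster sum on the left to vanish, which contradicts the strong admissibility of $\g$.

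Next, the holomorphicity and compactness step proceeds verbatim as in the proof of Theorem~\ref{thm:genericity}. Pick $\omega_m\in K^{(m)}\setminus\Sigma$ with $\omega_m\to\omega_\infty\in\A\setminus(\cup_m K^{(m)}\cup\Sigma)$ and $\omega_m\neq\omega_\infty$. For each $x\in\overline{\Omega'}$, the map
\[
\zeta_x\colon\C\setminus\Sigma\to\C,\qquad \omega\mapsto \partial_{x_1}u_\omega^\g(x)\,\overline{\partial_{x_1}u_{\overline{\omega}}^\g(x)}
\]
is holomorphic (by Lemma~\ref{lem:holomorphic} and continuity of $\partial_{x_1}\colon C^1(\overline{\Omega};\C)\to C(\overline{\Omega};\C)$), satisfies $\zeta_x(\omega)=|\partial_{x_1}u_\omega^\g(x)|^2$ on $\R\setminus\Sigma$, and is not identically zero by the previous step. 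By analytic continuation there exists $m_x\in\N$ with $\zeta_x(\omega_{m_x})\neq 0$, and by continuity $|\partial_{x_1}u_{\omega_{m_x}}^\g|>0$ on a ball $B(x,r_x)$. Compactness of $\overline{\Omega'}$ provides a finite subcover $\{B(x_i,r_{x_i})\}_{i=1}^N$; setting $n=\max_i m_{x_i}$ and letting $\Omega_\omega$ be the union of the balls whose associated frequency is $\omega\in K^{(n)}\setminus\Sigma$ (empty otherwise) yields the stated open cover.

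The restriction $\Omega'\Subset\Omega$ is used only to ensure that the pointwise step above applies at every point of the compactum $\overline{\Omega'}$, since the strong admissibility condition is posed on $\Omega$ rather than on $\overline{\Omega}$. The only genuinely new input is the analogue of Proposition~\ref{prop: inclusion}, and it does not cause any obstacle: the $C^1$-convergence of the Fourier series of $\partial_\omega^m u_\omega^\g$ that drove the gradient argument automatically yields convergence of the series of any single partial derivative, in sharp contrast with the Jacobian situation discussed in Section~\ref{sec:Towards-the-study}.
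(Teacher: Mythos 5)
Your proposal is correct and follows essentially the same route as the paper: the paper proves this proposition precisely by "careful inspection" of the proof of Theorem~\ref{thm:genericity}, i.e.\ by rerunning Proposition~\ref{prop: inclusion} and the holomorphicity--compactness argument with $\partial_{x_1}u_\omega^\g$ in place of $\nabla u_\omega^\g$, exactly as you do. Your closing observations --- that $C^1$-convergence of the Fourier series of $\partial_\omega^m u_\omega^\g$ automatically handles a single partial derivative (unlike the Jacobian), and that $\Omega'\Subset\Omega$ is needed because strong admissibility is only posed on $\Omega$ --- match the paper's own discussion.
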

This stronger result, together with the method discussed in this paper,
may provide suitable tools for the study of the Jacobian for problem
\eqref{eq:helmholtz-genericity} or other frequency-dependent PDE.

\bibliographystyle{abbrv}
\bibliography{2015-alberti}

\end{document}